\theoremstyle{plain}
\newtheorem{theorem}{Theorem}[section]
\newtheorem{lemma}[theorem]{Lemma}
\newtheorem{proposition}[theorem]{Proposition}
\newtheorem{corollary}[theorem]{Corollary}
\newtheorem{definition}[theorem]{Definition} \theoremstyle{definition}
\newtheorem{example}[theorem]{Example}
\newtheorem{remark}[theorem]{Remark}
\newcommand{\rr}       {\rightrightarrows}
\newcommand{\mx}{\mathfrak{X}} \newcommand{\dr}{\mathbf{d}}
 \newcommand{\ldr}[1]{{{\pounds}}_{#1}}
\newcommand{\an}[1]{\arrowvert_{#1}}
 \DeclareMathOperator{\pr}{pr}
\DeclareMathOperator{\Hom}{Hom}
\DeclareMathOperator{\End}{End}
\DeclareMathOperator{\id}{id}
\begin{document}

\title{Infinitesimal ideal systems and the Atiyah class}


\author{M. Jotz Lean}
\address{Mathematisches Institut, Georg-August-Universit\"at G\"ottingen \\
\texttt{madeleine.jotz-lean@mathematik.uni-goettingen.de}}

\begin{abstract}
  This short note gives a geometric interpretation of the Atiyah class
  of a Lie pair. It proves that it vanishes if the
  subalgebroid is the kernel of a fibration of Lie algebroids. In other
  words, the Atiyah class of a Lie pair vanishes if the subalgebroid
  is the fiber of an ideal system in the Lie algebroid. In order to
  prove this, a new characterisation of the ideal condition for an
  infinitesimal ideal system is found.

  \medskip
  \noindent\emph{Keywords:} Lie algebroids, Lie pairs, ideals, infinitesimal
  ideal systems, representations, linear connections, fibrations of
  Lie algebroids, Atiyah class. 

  \medskip
  \noindent\emph{Mathematics Subject Classification:} Primary:
  53B05, 
  53D17, 
  22E60. 
  Secondary: 16W25, 
  53C12. 
\end{abstract}

\maketitle

\tableofcontents

\section{Introduction}
Let $A\to M$ be a Lie algebroid and $J\subseteq A$ a subalgebroid.
The pair $(A,J)$ is called a \emph{Lie pair} \cite{ChStXu16}. Lie
pairs arise e.g.~in Lie theory, complex geometry and foliation
theory. For instance, a smooth foliation $F$ in the tangent algebroid
$TM$ of a smooth manifold $M$ defines a Lie pair $(TM,F)$.

Given a Lie pair, the quotient $A/J$ always carries a flat
representation of $J$, the \emph{Bott connection} \cite{Bott68,Bott72}
\[\nabla^J\colon \Gamma(J)\times\Gamma(A/J)\to\Gamma(A/J), \quad
  \nabla^J_j\bar a=\overline{[j,a]}
\]
for $j\in\Gamma(J)$ and $a\in\Gamma(A)$.  An \emph{extension} of the
Bott connection $\nabla^J$ is a linear $A$-connection on $A$, that
preserves $J$ (in the second argument) and induces so a quotient
connection $\Gamma(A)\times\Gamma(A/J)\to\Gamma(A/J)$ that restricts
to $\nabla^J$ in the first argument.  The \emph{Atiyah class}
\cite{ChStXu16} of the Lie pair $(A,J)$ is a cohomology class
$\alpha_J\in H^1(J,\Hom(A/J,\End(A/J)))$ that vanishes if and only if
$\nabla^J$ has a extension $\nabla$ that is \emph{transverse} to it:
For $a, b\in \Gamma(A)$ two $\nabla^J$-flat sections, $\nabla_ab$ is
again $\nabla^J$-flat -- see also \cite{Molino71a,Molino71b}
for the more classical setting of foliations.  However, from a more
elementary geometric point of view, it is not completely clear what
the existence of a transverse connection means for a Lie pair.  In the
case of a Lie pair $(TM,F)$ as above, the Atiyah class vanishes for
instance if $F$ is simple, i.e.~ if the space of leaves $M/F$ is a
smooth manifold. A transverse connection is in this case a projectable
connection -- such a connection always exists for a surjective
submersion $M\to M'$ with connected fibers.

This short note generalises this fact to give a geometric
interpretation of the Atiyah class of a Lie pair $(A,J)$. More
precisely, if there exists a fibration of Lie algebroids $A\to A'$
over a smooth surjective submersion $M\to M'$ with connected fibers,
such that $J$ is the kernel of the fibration, then its Atiyah class
vanishes.

\medskip

Such a fibration of Lie algebroids is always encoded in an
infinitesimal ideal structure on the Lie pair $(A,J)$. Such an
\emph{infinitesimal ideal system} is a triple $(F_M,J,\nabla)$, where
$F_M\subseteq TM$ is an involutive subbundle such that
$\rho(J)\subseteq F_M$ and $\nabla$ is a flat $F_M$-connection on
$A/J$ with the following properties:
\begin{enumerate}
\item[(iis1)] If $a\in\Gamma(A)$ is $\nabla$-parallel, then $[a,j]\in\Gamma(J)$ 
for all $j\in\Gamma(J)$.
\item[(iis2)] If $a,b\in\Gamma(A)$ are $\nabla$-parallel, then $[a,b]$ is also $\nabla$-parallel.
\item[(iis3)] If $a\in\Gamma(A)$ is  $\nabla$-parallel, then $\rho(a)$ is
  $\nabla^{F_M}$-parallel, where\linebreak
  $\nabla^{F_M}\colon\Gamma(F_M)\times\Gamma(TM/F_M)\to\Gamma(TM/F_M)$
  is the Bott connection.
\end{enumerate}
The definition is due to \cite{JoOr14}, but the structure already
appeared in \cite{Hawkins08} in geometric quantisation as the
infinitesimal version of polarisations on groupoids, and the case
$F_M=TM$ has been studied independently in \cite{CrSaSt12} in relation
with a modern approach to Cartan's work on pseudogroups.

For simplicity, \emph{ideal} is here short for \emph{infinitesimal
  ideal system}.  Ideals were called ``IM-foliations'' in an early
version of \cite{JoOr14} and in \cite{ZaZh12}, in analogy to the
``IM-2-forms'' of \cite{BuCrWeZh04}.  The new terminology is more
adequate, since an infinitesimal ideal system is an infinitesimal
version of the \emph{ideal systems} in \cite{Mackenzie05, HiMa90b}.
Let $A$ be a Lie algebroid over $M$. An ideal system of $A$ is a
triple $(R,J,\theta)$ where $J\subseteq A$ is a Lie subalgebroid,
$R = R(f)$ is a closed, embedded, wide, Lie subgroupoid of the pair
groupoid $M\times M\rr M$ corresponding to a surjective submersion
$f\colon M\to \bar M$:
\[ R(f)=\{(p,q)\in M\times M\mid f(p)=f(q)\},\]
and where $\theta$ is a linear action of $R(f)$ on
the vector bundle $A/J\to M$, such that:
\begin{enumerate}
  \item $\rho(J)\subseteq T^fM$;
\item  if $a,b\in\Gamma(A)$ are
  $\theta$-stable;
  \[ \theta((p,q), \bar a(q))=\bar a(p) \text{ for all } (p,q)\in R
  \]
  (and similarly for $b$),
  then $[a, b]$ is $\theta$-stable;
\item  if $a\in\Gamma(A)$ is
  $\theta$-stable and  $j\in\Gamma(J)$, then $[a,j]\in\Gamma(J)$;
\item the induced map $\bar\rho\colon A/J \to TM/T^fM$ is
  $R(f)$-equivariant with respect to $\theta$ and the canonical action
  $\theta_0$ of $R(f)$ on $TM/T^fM$.
\end{enumerate}
Here, the canonical action $\theta_0$ is the action
transported from the pullback bundle by the isomorphism $TM/T^fM\simeq 
f!T\bar M$ .

An infinitesimal ideal system $(F_M, J, \nabla)$ with $F_M$ simple and
$\nabla$ with trivial holonomy always integrates to an ideal system:
$f\colon M\to M/F_M$ is the surjective submersion, so $F_M= T^fM$, and
$\theta$ is given by parallel transport along the leaves of $F_M$, see
\cite{JoOr14}. The conditions on $F_M$ and $\nabla$ are equivalent to
the quotient $(A/J)/\nabla \simeq (A/J)/\theta\to M/F_M$ being a
vector bundle, which inherits then a Lie algebroid structure `for
free' such that the quotient map is a fibration of Lie algebroids
\cite{JoOr14}. Ideal systems were in fact defined as the kernels of
fibrations of Lie algebroids. If such an infinitesimal ideal system
$(F_M,J, \nabla)$ exists, then \emph{$A$ is reducible by the fiber
  $J$}, since $J$ becomes the kernel of a fibration of Lie algebroids.

In general, a Lie pair will not carry an infinitesimal ideal
structure, so the Lie pairs $(A,J)$ which do must be considered
special. The author's general goal is to find obstructions to
Lie pairs carrying infinitesimal ideal system structures.

\medskip

This paper solves a slightly different problem and proves that if a
Lie algebroid $A$ is reducible by the fiber $J$, then the Atiyah class
of the Lie pair $(A,J)$ is zero.
\begin{theorem}\label{main}
  Let $A$ be a Lie algebroid on $M$ and $J$ a subalgebroid. If
  there exists an infinitesimal ideal system $(F_M,J,\nabla^i)$ in
  $A$, such that the quotient $(A/J)/\nabla^i\to M/F_M$ exists and is smooth, then
  the Atiyah class of the Lie pair $(A,J)$ vanishes.
\end{theorem}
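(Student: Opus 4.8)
\emph{The plan is to exhibit one concrete transverse extension of the Bott connection $\nabla^J$ and to show that its Atiyah cocycle vanishes identically; since the Atiyah class is represented by that cocycle and, by the characterisation recalled in the introduction, transversality is equivalent to its vanishing, this proves the theorem.} First I would unwind the hypothesis as in \cite{JoOr14}: writing $\bar M:=M/F_M$ and $\bar A:=(A/J)/\nabla^i$, the assumption that the quotient exists and is smooth means that $f\colon M\to\bar M$ is a surjective submersion with connected fibres and $\nabla^i$ has trivial holonomy, so that the bundle map induced by $\nabla^i$ identifies $A/J\cong f^*\bar A$; under this identification the $\nabla^i$-parallel sections of $A/J$ are exactly the pullbacks $f^*\bar b_0$ of sections $\bar b_0\in\Gamma(\bar A)$, and these locally frame $A/J$. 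It is convenient to record the identity $\nabla^J_j=\nabla^i_{\rho(j)}$ for $j\in\Gamma(J)$, which follows from (iis1) by a tensoriality argument (both sides are connections in the $A/J$-slot with the same anchor $\rho(j)$, and they agree on the $\nabla^i$-parallel frame, where (iis1) forces $\overline{[j,a]}=0$); in particular every $\nabla^i$-parallel section is $\nabla^J$-flat.

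For the construction I would choose any $\bar A$-connection $\bar D$ on the vector bundle $\bar A\to\bar M$ (e.g.\ one induced through the anchor from a $T\bar M$-connection built with a partition of unity) and set $\bar\nabla:=q^*\bar D$, the pullback $A$-connection on $A/J=f^*\bar A$ along the fibration $q\colon A\to\bar A$ with $\ker q=J$. Concretely, for a projectable $a\in\Gamma(A)$ (that is, $q\circ a=\bar a'\circ f$ for some $\bar a'\in\Gamma(\bar A)$) and a pullback section one has $\bar\nabla_a(f^*\bar b_0)=f^*(\bar D_{\bar a'}\bar b_0)$, which is again a pullback section, hence $\nabla^i$-parallel and so $\nabla^J$-flat. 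Choosing any splitting $A\cong J\oplus A/J$ lifts $\bar\nabla$ to a $J$-preserving $A$-connection on $A$, so $\bar\nabla$ is a legitimate extension in the sense of the introduction. That it extends the Bott connection is immediate: for $j\in\Gamma(J)=\Gamma(\ker q)$ one has $q\circ j=0$, so $\bar\nabla_j$ kills every pullback section, as does $\nabla^J_j$ by (iis1); since both obey the Leibniz rule with anchor $\rho(j)$ and pullback sections frame $A/J$, we get $\bar\nabla_j=\nabla^J_j$.

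It then remains to compute the Atiyah cocycle $\mathrm{At}(j)(\bar a)=\nabla^J_j\circ\bar\nabla_a-\bar\nabla_a\circ\nabla^J_j-\bar\nabla_{[j,a]}\in\End(A/J)$. A direct check shows $\mathrm{At}(j)(\bar a)$ is $C^\infty(M)$-linear in the slot on which it acts and depends only on $\bar a\in\Gamma(A/J)$ (independence of the lift of $\bar a$ uses precisely the flatness of $\nabla^J$, as the ambiguity is the curvature $R^{\nabla^J}$), so it suffices to evaluate on the $\nabla^i$-parallel frame. Taking $a,b$ projectable and $j\in\Gamma(J)$: the section $\bar\nabla_a\bar b$ is a pullback section, hence $\nabla^J$-flat, so $\nabla^J_j(\bar\nabla_a\bar b)=0$; also $\nabla^J_j\bar b=0$, whence $\bar\nabla_a(\nabla^J_j\bar b)=0$; and (iis1) gives $[j,a]\in\Gamma(J)$, so $\bar\nabla_{[j,a]}=\nabla^J_{[j,a]}$ and $\bar\nabla_{[j,a]}\bar b=\nabla^J_{[j,a]}\bar b=0$ because $\bar b$ is $\nabla^J$-flat. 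All three terms vanish, so $\mathrm{At}\equiv 0$, $\bar\nabla$ is transverse, and the Atiyah class is zero.

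The hard part will not be the three-term cancellation, which is driven entirely by (iis1), but the formal underpinnings that make the reduction to the frame valid: setting up the pullback connection $q^*\bar D$ and its $C^\infty(M)$-linearity in $a$, justifying the identification $A/J\cong f^*\bar A$ from trivial holonomy, and verifying carefully the tensoriality and lift-independence of the Atiyah cocycle, which is exactly the point where the flatness of the Bott connection $\nabla^J$ is indispensable.
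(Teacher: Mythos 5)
Your proof is correct, but it takes a genuinely different route from the paper. You use the smoothness hypothesis at the very start to produce the quotient Lie algebroid $\bar A=(A/J)/\nabla^i$ and then pull back an arbitrary $\bar A$-connection along the fibration $q\colon A\to\bar A$; the resulting extension of $\nabla^J$ has Atiyah cocycle vanishing \emph{identically} on the $\nabla^i$-parallel frame, and the tensoriality/lift-independence argument (where flatness of $\nabla^J$ enters) finishes the job. The paper instead never passes to $\bar A$ at the level of the Lie pair: it takes a $TM$-extension $\nabla$ of $\nabla^i$, forms the basic connection $\nabla^{\rm bas}_{a_1}a_2=[a_1,a_2]+\nabla_{\rho(a_2)}a_1$, shows this is an extension of the Bott connection (Lemma \ref{compatibility_extensions_1} and Proposition \ref{compatibility_extensions_2}), and proves the identity $\rho^\star\omega_\nabla=\omega_{\nabla^{\rm bas}}$, i.e.\ $\alpha_J=\rho^\star\alpha_{\nabla^i}$ (Theorem \ref{last_key}); the vanishing is then delegated to the purely vector-bundle statement that a fibration admits a projectable connection (Corollary \ref{quotient_cor}). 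Your approach is shorter and more self-contained for the theorem as stated, and it produces a representative whose cocycle is zero on the nose rather than merely exact. The paper's factorisation buys more: the chain map $\rho^\star$ and the identity $\alpha_J=\rho^\star\alpha_{\nabla^i}$ hold for \emph{any} infinitesimal ideal system, without the smoothness and trivial-holonomy hypotheses your construction needs, so it localises the obstruction even when no quotient algebroid exists. Two small points to tighten in a write-up: when lifting $\bar\nabla$ to a $J$-preserving $A$-connection on $A$ you must also fix an auxiliary $A$-connection on $J$ (the splitting alone does not suffice); and the well-definedness of $q^*\bar D$ in the lower slot uses that $q$ is a morphism over $f$, so that $\rho(a)(f^*h)=f^*(\bar\rho(\bar a')h)$ — worth stating explicitly since it is where the Lie algebroid (rather than vector bundle) structure of the fibration is used.
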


Since the infinitesimal ideal systems $(F_M,J,\nabla^i)$ such that the
quotient Lie algebroid $(A/J)/\nabla^i\to M/F_M$ exists are exactly
the infinitesimal ideal systems that integrate to ideal systems
\cite{JoOr14}, the theorem can be reformulated as follows.

\begin{theorem}\label{main'}
  Let $A$ be a Lie algebroid on $M$ and $J$ a subalgebroid. If
  there exists an ideal system $(R,J,\theta)$ in $A$, then the Atiyah
  class of the Lie pair $(A,J)$ vanishes.
\end{theorem}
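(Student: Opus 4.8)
The plan is to exploit the equivalence recalled in the introduction: the Atiyah class of $(A,J)$ vanishes precisely when the Bott connection $\nabla^J$ admits a transverse extension. So it suffices to build one. Write $f\colon M\to \bar M:=M/F_M$ for the surjective submersion with connected fibres underlying the simple foliation $F_M$, and set $B:=A/J$ and $\bar B:=(A/J)/\nabla^i=A'$. Since $\nabla^i$ is flat with trivial holonomy, parallel transport along the fibres of $f$ identifies $B\cong f^*\bar B$, the $\nabla^i$-parallel sections of $B$ are exactly the pullbacks $f^*\Gamma(\bar B)$, and these span $B$ pointwise. By \cite{JoOr14} the ideal system integrates to a fibration of Lie algebroids $\phi\colon A\to A'$ over $f$ with $\ker\phi=J$; I call $a\in\Gamma(A)$ \emph{projectable} if $\bar a\in\Gamma(B)$ is $\nabla^i$-parallel, equivalently if $\phi\circ a=a'\circ f$ for some $a'\in\Gamma(A')$. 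These projectable sections contain $\Gamma(J)$ and span $A$ pointwise.

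Next I would choose any $A'$-connection $\bar D$ on the vector bundle $\bar B$ and transport it: for a projectable $a$ with image $a'$ and a parallel section $s=f^*\bar s$ put $D_a s:=f^*(\bar D_{a'}\bar s)$. Because a connection is tensorial in its derivative slot, the right-hand side at a point depends only on $\phi(a(m))=a'(f(m))$, so it is independent of the presentation of $a$ and is $C^\infty(M)$-linear in $a\in\Gamma(A)$. Extending by the Leibniz rule over a local $\nabla^i$-parallel frame produces an $A$-connection $D$ on $B$, which I then lift to a $J$-preserving $A$-connection $\nabla$ on $A$ inducing $D$ on the quotient (such lifts form a non-empty affine space). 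Since transversality of $\nabla$ depends only on the induced quotient connection $D$, it remains to check that $D$ extends $\nabla^J$ and is transverse.

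That $D$ extends the Bott connection is immediate on parallel sections: for $j\in\Gamma(J)$ we have $\phi\circ j=0$, hence $D_j s=0$ by the tensoriality above, while $\nabla^J_j s=0$ because a $\nabla^i$-parallel section is $\nabla^J$-flat by (iis1); the Leibniz rule then gives $D|_{\Gamma(J)}=\nabla^J$.

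The crux is transversality, i.e.\ that $D_a\bar b$ is $\nabla^J$-flat whenever $\bar a,\bar b$ are. The difficulty here is that the $\nabla^J$-flat sections form a family that is in general larger than the $\nabla^i$-parallel ones on which $D$ was defined. I would bypass this by a curvature computation: for $j\in\Gamma(J)$, using $D|_{\Gamma(J)}=\nabla^J$, $D_j\bar b=\nabla^J_j\bar b=0$, and $[j,a]\in\Gamma(J)$ (the $\nabla^J$-flatness of $\bar a$), one obtains \[\nabla^J_j(D_a\bar b)=D_jD_a\bar b=R^D(j,a)\bar b,\] where $R^D$ is the curvature of $D$. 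As $R^D$ is tensorial in all three arguments, it suffices to verify $R^D(j,a)=0$ on the spanning sets, namely for $j\in\Gamma(J)$, projectable $a$ and parallel $s$; there all three terms of $R^D(j,a)s=D_jD_as-D_aD_js-D_{[j,a]}s$ vanish because $D_as$ and $s$ are parallel (hence $\nabla^J$-flat) and $[j,a]\in\Gamma(J)$ by (iis1). Thus $\nabla^J_j(D_a\bar b)=0$, the extension $\nabla$ is transverse, and the Atiyah class of $(A,J)$ vanishes. The main obstacle is precisely this last step — reducing transversality on all $\nabla^J$-flat sections to the tensorial identity $\nabla^J_j(D_a\bar b)=R^D(j,a)\bar b$ — which is essentially the content of the paper's new characterisation of the ideal condition.
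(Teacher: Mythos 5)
Your argument is correct, but it takes a genuinely different route from the paper's. The paper never constructs a transverse $A$-connection directly: it first defines an Atiyah class $\alpha_{\nabla^i}$ of the infinitesimal ideal system itself, living in $H^1_{\dr_{\nabla^i}}(F_M,\Hom(TM/F_M,\End(A/J)))$ and built from a $TM$-connection extending $\nabla^i$; it shows this class vanishes when the fibration exists (Corollary \ref{quotient_cor}, via the projectable connection of Proposition \ref{existence_of_connection}); and it then proves that the anchor induces a chain map $\rho^\star$ with $\rho^\star\alpha_{\nabla^i}=\alpha_J$ (Lemma \ref{lem2}, Theorem \ref{last_key}), the bridge being the basic connection $\nabla^{\rm bas}_{a_1}a_2=[a_1,a_2]+\nabla_{\rho(a_2)}a_1$. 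You instead pull an arbitrary $A'$-connection on $A'=(A/J)/\nabla^i$ back along the Lie algebroid fibration $\varphi\colon A\to A'$ to obtain an $A$-connection $D$ on $A/J$ --- the $A$-connection analogue of Proposition \ref{existence_of_connection} --- and kill the relevant curvature by hand. Your route is shorter and self-contained for Theorem \ref{main'}; the paper's route buys the stronger identity $\alpha_J=\rho^\star\alpha_{\nabla^i}$, valid for \emph{any} infinitesimal ideal system (no smoothness of $M/F_M$, no holonomy hypothesis), which is the real obstruction-theoretic content of the paper. Two minor points. First, the framing via ``transversality'' is slightly off: the implication ``transverse extension $\Rightarrow$ vanishing class'' needs $\nabla^J$-flat sections to span $A/J$, which can fail for a general Lie pair; but your computation actually shows $R^D(j,a)=0$ for \emph{all} $j\in\Gamma(J)$ and $a\in\Gamma(A)$, hence the Atiyah cocycle $\omega_\nabla(j)(\bar a)(\bar b)=\overline{R_\nabla(j,a)b}=R^D(j,a)\bar b$ vanishes identically --- phrase it that way and the issue disappears. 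Second, the consistency of the Leibniz extension of $D$ across different $\nabla^i$-parallel frames uses that transition functions between such frames are basic together with $Tf\circ\rho=\rho'\circ\varphi$; this deserves a line, exactly as in the proof of Proposition \ref{existence_of_connection}.
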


\subsection*{Outline of the paper}
Section \ref{sec_fibrations} considers fibrations of vector bundles
and infinitesimal ideal systems in vector bundles (i.e.~Abelian Lie
algebroids). It shows that the Atiyah class of such an infinitesimal
ideal system vanishes if the infinitesimal ideal system defines a
fibration of the vector bundle.  Section \ref{prelim_iis} proves a new
formulation of the ideal condition (iis1) in the definition of an
infinitesimal ideal system. It further explains fibrations of Lie
algebroids versus infinitesimal ideal systems \cite{JoOr14}, and
defines the Atiyah class of an infinitesimal ideal system.  Section
\ref{sec_proof_main} proves the main theorem (Theorem \ref{main}).

\subsection*{Acknowledgements}
The author thanks Mathieu Sti\'enon for inspiring conversations at the
origin of this paper.

\section{Preliminaries}\label{prelim}

\subsection{Flat connections and fibrations of vector bundles}\label{sec_fibrations}
Consider a fibration of vector bundles 
\begin{equation*}
\begin{xy}
\xymatrix{
E\ar[r]^{\varphi}\ar[d]_{q_E}& E'\ar[d]^{q_{E'}}\\
M\ar[r]_{\varphi_0}&M'
}
\end{xy}
\end{equation*}
That is, the map $\varphi_0$ is a surjective submersion with connected
fibres and $\varphi^!\colon E\to \varphi_0^!E'$ is a surjective vector
bundle morphism over the identity on $M$.

\begin{proposition}\label{existence_of_connection}
  Let $\varphi\colon E\to E'$ be a fibration of vector bundles over
  $\varphi_0\colon M\to M'$.  For any linear connection
  $\nabla'\colon \mx(M')\times\Gamma(E')\to\Gamma(E')$ there exists a
  linear connection $\nabla\colon \mx(M)\times\Gamma(E)\to\Gamma(E)$
  such that if $X\sim_{\varphi_0}X'$ and $e\sim_\varphi e'$ then
  $\nabla_Xe\sim_\varphi \nabla'_{X'}e'$.
\end{proposition}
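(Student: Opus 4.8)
The plan is to factor the desired projectability through the pullback connection of $\nabla'$ along $\varphi_0$. Recall that since $\varphi^!\colon E\to\varphi_0^!E'$ is a surjective vector bundle morphism over $\id_M$, it has constant rank, and hence $K:=\ker\varphi^!\subseteq E$ is a vector subbundle, with $\varphi^!$ inducing an isomorphism $E/K\simeq\varphi_0^!E'$. Note also that a section $e\in\Gamma(E)$ is $\varphi$-related to $e'\in\Gamma(E')$ exactly when $\varphi^!\circ e=\varphi_0^!e'$ as sections of $\varphi_0^!E'$, where $\varphi_0^!e'\in\Gamma(\varphi_0^!E')$, $p\mapsto e'(\varphi_0(p))$, is the pullback section. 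I would therefore reduce the statement to the following claim: there is a linear connection $\nabla$ on $E$ that is $\varphi^!$-related to the pullback connection $\varphi_0^!\nabla'$ on $\varphi_0^!E'$, in the sense that $\varphi^!(\nabla_Xe)=(\varphi_0^!\nabla')_X(\varphi^!e)$ for all $X\in\mx(M)$ and $e\in\Gamma(E)$. Granting this, if $X\sim_{\varphi_0}X'$ and $e\sim_\varphi e'$, then $\varphi^!(\nabla_Xe)=(\varphi_0^!\nabla')_X(\varphi_0^!e')=\varphi_0^!(\nabla'_{X'}e')$, the last equality being the defining property of the pullback connection applied to a pullback section along $\varphi_0$-related fields; this is precisely $\nabla_Xe\sim_\varphi\nabla'_{X'}e'$.

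It remains to build such a $\nabla$, which I would do globally by splitting off the kernel. Choose a fibre metric on $E$ and let $C:=K^\perp$, so that $E=K\oplus C$ and $\varphi^!|_C\colon C\to\varphi_0^!E'$ is an isomorphism; write $\sigma:=(\varphi^!|_C)^{-1}\colon\varphi_0^!E'\to C\subseteq E$ for the resulting splitting. Pick any linear connection $\nabla^K$ on $K$ (these always exist). For $e=k+c$ with $k\in\Gamma(K)$ and $c\in\Gamma(C)$, set
\[
\nabla_Xe:=\nabla^K_Xk+\sigma\bigl((\varphi_0^!\nabla')_X(\varphi^!c)\bigr).
\]
The Leibniz rule is checked directly: the $K$-part contributes $X(f)k+f\nabla^K_Xk$, while in the $C$-part one uses $(\varphi_0^!\nabla')_X(\varphi^!(fc))=X(f)\,\varphi^!c+f(\varphi_0^!\nabla')_X(\varphi^!c)$ together with $\sigma(\varphi^!c)=c$ to recover $X(f)c+f\sigma((\varphi_0^!\nabla')_X(\varphi^!c))$; summing gives $\nabla_X(fe)=X(f)e+f\nabla_Xe$, so $\nabla$ is a connection. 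Finally $\varphi^!(\nabla_Xe)=\varphi^!(\nabla^K_Xk)+(\varphi_0^!\nabla')_X(\varphi^!c)=(\varphi_0^!\nabla')_X(\varphi^!e)$, since $\varphi^!\circ\nabla^K_X=0$ (because $\nabla^K_Xk\in\Gamma(K)$), $\varphi^!\circ\sigma=\id$, and $\varphi^!c=\varphi^!e$; this is the required $\varphi^!$-relation.

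I do not expect a genuine obstacle here: once the problem is phrased through the pullback connection, the construction is a standard kernel-splitting argument, and it is worth remarking that it uses neither the connectedness of the fibres nor even that $\varphi_0$ is a submersion --- only that $\varphi^!$ is a surjective bundle morphism over the identity, so that $\ker\varphi^!$ is a subbundle admitting a complement. The one point requiring care is the bookkeeping identifying $\varphi$-relatedness of sections with the equality $\varphi^!\circ e=\varphi_0^!e'$ and the invocation of the defining property of $\varphi_0^!\nabla'$ on pullback sections. An alternative, more hands-on route avoiding the pullback connection would construct $\nabla$ locally on submersion charts $U\simeq V\times W$ adapted to $\varphi_0$ and then patch with a partition of unity $\{\chi_\alpha\}$; this preserves projectability because $\varphi$ is fibrewise linear and $\sum_\alpha\chi_\alpha=1$, so that $\varphi(\sum_\alpha\chi_\alpha\nabla^\alpha_Xe)=\sum_\alpha\chi_\alpha\,\varphi(\nabla^\alpha_Xe)$ collapses to the single projected value.
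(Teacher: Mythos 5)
Your proof is correct and follows essentially the same route as the paper: both split $E\simeq K\oplus\varphi_0^!E'$ via a choice of complement to the kernel, place an arbitrary connection on $K$ and the pullback of $\nabla'$ on the other summand, and read off projectability. The only cosmetic differences are that you obtain the splitting from a fibre metric rather than an abstract choice, and you use the standard pullback connection directly where the paper defines $\varphi^!\nabla'$ on $\varphi_0$-projectable vector fields and extends by $C^\infty(M)$-linearity.
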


\begin{proof} Let $K\subseteq E$ be the kernel of the map $\varphi$;
  $K$ is a subbundle of $E$. The inclusion of $K$ in $E$ is denoted by
  $\iota$. Set $F_M=T^{\varphi_0}M$, an involutive and simple
  subbundle of $TM$.  Choose a splitting $j\colon \varphi_0^!E'\to E$
  of the short exact sequence $0\to K\to E\to \varphi_0^!E'\to 0$ of
  vector bundles over $M$. Recall that each smooth section $e'$ of
  $E'\to M'$ defines a smooth section $e'_{!}$ of $\varphi_0^!E'\to M$
  by $e'_{!}(m)=(m,e'(\varphi_0(m)))$ for all $m\in M$, and that the
  space of sections of $\varphi_0^!E'$ is generated as a
  $C^\infty(M)$-module by these sections.  Further,
  $\varphi\circ j\circ e'_{!}=e'\circ\varphi_0$, or in other words
  $(j\circ e'_{!})\sim_\varphi e'$.

  The connection $\nabla'$ defines as follows a connection
  $\varphi^!\nabla'\colon\mx(M)\times\Gamma(\varphi_0^!E')\to\Gamma(\varphi_0^!E')$:
  since $\nabla^{F_M}$ is flat, it is sufficient to define
  $\varphi^!\nabla'$ on vector fields $X\in\mx(M)$ that are
  $\varphi_0$-projectable to vector fields on $M'$, i.e.~such that the
  class of $X$ in $\Gamma(TM/F_M)$ is $\nabla^{F_M}$-flat. Set
\[(\varphi^!\nabla')_Xe'_{!}=(\nabla'_{X'}e')_{!}
\]
where $X'\in\mx(M')$ is the projection of $X$;
$X\sim_{\varphi_0}X'$. In particular, $(\varphi^!\nabla')_Xe'_{!}=0$ for $X\in\Gamma(F_M)$.

Choose any connection $\tilde\nabla\colon\mx(M)\times\Gamma(K)\to\Gamma(K)$ and 
define a connection $\nabla\colon\mx(M)\times\Gamma(E)\to\Gamma(E)$ by
\[\nabla_X(\iota(k)+j(e'))=\iota(\tilde\nabla_Xk)+j((\varphi^!\nabla')_Xe')
\]
for all $e'\in\Gamma(\varphi_0^!E')$ and all $k\in\Gamma(K)$. It is
now easy to check that if $X\sim_{\varphi_0}X'$ and $e\sim_\varphi e'$
then $\nabla_Xe\sim_\varphi \nabla'_{X'}e'$.
\end{proof}

Consider here also a fibration of vector bundles
$\varphi\colon E\to E'$ over $\varphi_0\colon M \to M'$ and set
$K:=\ker(\varphi)\subseteq E$ and $F_M:=T^{\varphi_0}M\subseteq TM$,
an involutive subbundle.  Here and in the following, $\bar e$ always
denotes the projection of $e\in\Gamma(E)$ to a section of
$\Gamma(E/K)$.  Define a flat connection
$\nabla^\varphi\colon \Gamma(F_M)\times \Gamma(E/K)\to\Gamma(E/K)$ by
setting $\nabla^\varphi_X\bar e=0$ for all sections $e\in \Gamma(E)$
that are $\varphi$-related to some section $e'\in\Gamma(E')$,
i.e.~such that $\varphi\circ e=e'\circ \varphi_0$; see \cite{JoOr14}.

Assume that a linear connection
$\nabla\colon\mx(M)\times\Gamma(E)\to\Gamma(E)$ projects to a
connection $\nabla'\colon\mx(M')\times\Gamma(E')\to\Gamma(E')$. Then
for $X\sim_{\varphi_0} X'$ and $e\sim_\varphi e'$, the sections
$\nabla_Xe$ and $\nabla'_{X'}e'$ are $\varphi$-related;
$\nabla_Xe\sim_\varphi \nabla'_{X'}e'$. In other words, if $\bar X$ is
a flat vector field for the $F_M$-Bott connection on $TM/F_M$ and
$\bar e$ is $\nabla^\varphi$-flat, then $\overline{\nabla_Xe}$ must be
$\nabla^\varphi$-flat and project to $\nabla'_{X'}e'$.

In particular if $X\sim_{\varphi_0} X'$ and $e\sim_\varphi 0$,
i.e. $e\in\Gamma(K)$, then again $\nabla_Xe$ must be a section of
$K$. This shows that $\nabla$ restricts to a connection
$\nabla\colon\mx(M)\times\Gamma(K)\to \Gamma(K)$ and defines so a
quotient connection
$\bar\nabla\colon\mx(M)\times\Gamma(E/K)\to\Gamma(E/K)$.

Now if $X\in\Gamma(F_M)$, then $X\sim_{\varphi_0} 0$ and so
$\nabla_Xe\sim_\varphi 0$ for all $\nabla^\varphi$-flat\footnote{By
  abuse of notation, say that $e\in\Gamma(E)$ is
  $\nabla^\varphi$-flat if $\bar e\in\Gamma(E/K)$ is
  $\nabla^\varphi$-flat.}  sections $e\in\Gamma(E)$. This means that
$\nabla_Xe\in\Gamma(K)$ for those sections. Then
$\bar \nabla_X\bar e=0=\nabla^\varphi_X \bar e$, and since the
$\nabla^\varphi$-flat sections of $E/K$ generate all sections of $E/K$
as a $C^\infty(M)$-module, this implies that the restriction of
$\bar\nabla$ to $\Gamma(F_M)\times\Gamma(E/K)\to\Gamma(E/K)$ equals
the connection $\nabla^\varphi$. This yields the following
proposition.

\begin{proposition}\label{prop2.2}
  Let $\varphi\colon E\to E'$ a fibration of vector bundles over
  $\varphi_0\colon M\to M'$ and consider $\nabla^\varphi$ as above.
  Take a linear connection
  $\nabla\colon \mx(M)\times\Gamma(E)\to\Gamma(E)$ that projects to a
  linear connection
  $\nabla'\colon \mx(M')\times\Gamma(E')\to\Gamma(E')$ as in
  Proposition \ref{existence_of_connection}. Then $\nabla$ is an
  extension of $\nabla^\varphi$ in the sense of the following
  definition.
\end{proposition}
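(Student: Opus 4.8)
The plan is to extract both defining requirements of an extension from the single projectability hypothesis imposed on $\nabla$. Following the notion of extension for the Bott connection described in the introduction, I expect \emph{the following definition} to require that $\nabla$ (i) preserve the subbundle $K$, so that it descends to a quotient connection $\bar\nabla\colon\mx(M)\times\Gamma(E/K)\to\Gamma(E/K)$, and (ii) have this descended connection $\bar\nabla$ restrict to $\nabla^\varphi$ on $\Gamma(F_M)\times\Gamma(E/K)$. Both follow by feeding degenerate related pairs into the hypothesis ``$X\sim_{\varphi_0}X'$ and $e\sim_\varphi e'$ imply $\nabla_Xe\sim_\varphi\nabla'_{X'}e'$'', which is exactly the assumption that $\nabla$ projects to $\nabla'$.

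For (i), take $k\in\Gamma(K)$; by definition of $K=\ker\varphi$ this means $k\sim_\varphi 0$. If $X$ is $\varphi_0$-projectable, say $X\sim_{\varphi_0}X'$, the hypothesis gives $\nabla_Xk\sim_\varphi\nabla'_{X'}0=0$, i.e.\ $\nabla_Xk\in\Gamma(K)$. Since the condition $\nabla_Xk\in\Gamma(K)$ is $C^\infty(M)$-linear and local in $X$, and since the projectable vector fields together with the sections of $F_M$ (which are $\varphi_0$-related to $0$) generate $\mx(M)$ as a $C^\infty(M)$-module, this holds for every $X\in\mx(M)$. Hence $\nabla$ restricts to a connection on $K$ and descends to a quotient connection $\bar\nabla$ on $E/K$.

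For (ii), fix $X\in\Gamma(F_M)$, so that $X\sim_{\varphi_0}0$. For any $e\in\Gamma(E)$ that is $\varphi$-related to some $e'\in\Gamma(E')$ the hypothesis now yields $\nabla_Xe\sim_\varphi\nabla'_0e'=0$, whence $\nabla_Xe\in\Gamma(K)$ and therefore $\bar\nabla_X\bar e=0$. On the other hand $\nabla^\varphi_X\bar e=0$ by the very definition of $\nabla^\varphi$, since such $\bar e$ are precisely the $\nabla^\varphi$-flat sections. Thus $\bar\nabla$ and $\nabla^\varphi$ agree on these generators. Both are connections in the $E/K$-slot, so over $\Gamma(F_M)$ their difference is $C^\infty(M)$-linear in $\bar e$, and a $C^\infty(M)$-linear map that vanishes on a generating set vanishes identically. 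Invoking the fact from \cite{JoOr14} that the $\nabla^\varphi$-flat sections generate $\Gamma(E/K)$ over $C^\infty(M)$, I conclude $\bar\nabla\an{\Gamma(F_M)\times\Gamma(E/K)}=\nabla^\varphi$, which is (ii).

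The analytic content here is light: once the projectability hypothesis is granted, everything reduces to evaluating it at the two special pairs, $(X,0)$ with $X$ projectable and $(X,e)$ with $X$ vertical and $e$ flat. The only point requiring care --- and the step I would treat as the main obstacle --- is the bookkeeping that lets me pass from generating sections to arbitrary ones: namely that projectable vector fields generate $\mx(M)$ and, more importantly, that the $\nabla^\varphi$-flat sections generate $\Gamma(E/K)$ (cited from \cite{JoOr14}), together with the observation that the identities in (i) and (ii) are tensorial in the relevant argument so that checking them on generators suffices.
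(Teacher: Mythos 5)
Your proposal is correct and follows essentially the same route as the paper: feed the degenerate related pairs $(X,k)$ with $k\in\Gamma(K)\sim_\varphi 0$ and $(Y,e)$ with $Y\in\Gamma(F_M)\sim_{\varphi_0}0$, $e$ $\varphi$-related, into the projectability hypothesis, then extend by $C^\infty(M)$-linearity using that projectable vector fields generate $\mx(M)$ and $\nabla^\varphi$-flat sections generate $\Gamma(E/K)$. You have also correctly anticipated the two clauses of Definition \ref{def_nabla_p}, and your explicit tensoriality bookkeeping only makes precise what the paper leaves implicit.
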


\begin{definition}\label{def_nabla_p}
Let $E\to M$ be a vector bundle, $K\subseteq E$ a subbundle and
$F_M\subseteq TM$ an involutive subbundle, with a flat connection 
\[\nabla^p\colon \Gamma(F_M)\times\Gamma(E/K)\to\Gamma(E/K).\]
Then a connection
$\nabla\colon\mx(M)\times\Gamma(E)\to\Gamma(E)$ is an \emph{extension
of $\nabla^p$} if 
\begin{enumerate}
\item $\nabla$ restricts to a connection
  $\mx(M)\times\Gamma(K)\to\Gamma(K)$ and 
\item the induced (quotient) connection
  $\bar\nabla\colon\mx(M)\times\Gamma(E/K)\to\Gamma(E/K)$ satisfies 
$\bar \nabla_Y=\nabla^p_Y$ for all $Y\in\Gamma(F_M)$.
\end{enumerate}
\end{definition}

 In general, $\nabla^p$ as in Definition \ref{def_nabla_p} does not
  need to come from a vector bundle fibration. If $F_M$ is simple
  and $\nabla^p$ has no holonomy, then it defines a fibration of vector bundles \cite{JoOr14}:
\begin{equation*}
\begin{xy}
\xymatrix{
E\ar[r]\ar[d]_{q_E}& (E/K)/\nabla^p\ar[d]^{[q_E]}\\
M\ar[r]&M/F_M
}
\end{xy}
\end{equation*}
The space $(E/K)/\nabla^p$ is the quotient of $E/K$ by the equivalence
relation generated by parallel transport along paths in the leaves of
$F_M$. That is, $\bar e_m$ and $\bar e'_{m'}$ are equivalent if there
exists a $\nabla^p$-flat section $\bar e$ of $E/K$ with
$\bar e(m)=\bar e_m$ and $\bar e(m')=\bar e'_{m'}$.  The triple
$(F_M,K,\nabla^p)$ is in fact an infinitesimal ideal system in
the (Abelian) Lie algebroid $(E\to M, \rho=0, [\cdot\,,\cdot]=0)$.

The upper index $p$ in Definition \ref{def_nabla_p} means to recall
that in general, $\nabla^p$ is what can be seen as an infinitesimal
vector bundle ``projection'' -- i.e.~the right candidate for defining
a fibration of vector bundles -- even if $M/F_M$ is not a smooth
manifold, or $\nabla^p$ has non trivial holonomy.  The remainder of
this section shows that if $\nabla^p$ is defined by a fibration, then
its \emph{Atiyah class} must vanish. The first step towards the
construction of the Atiyah class of $\nabla^p$ is the choice of an
extension of $\nabla^p$. This is possible due to the following lemma.

\begin{lemma}\label{ex_of_ext}
  Let $F_M\subseteq TM$ be an involutive subbundle of the tangent of a
  smooth manifold $M$. Let $E\to M$ be a smooth subbundle, and
  $K\subseteq E$ a subbundle.  There always exists an extension of a
  linear connection
  \[\nabla^p\colon\Gamma(F_M)\times\Gamma(E/K)\to\Gamma(E/K).
    \]
    Two extensions differ by a form
    $\phi\in \Gamma(T^*M\otimes E^*\otimes E)$ that
    induces a well-defined form
    $\bar\phi\in\Gamma(\Hom(TM/F_M,\End(E/K)))$ by
    $\bar\phi(\bar X)(\bar e)=\overline{\phi(X,e)}$ for all
    $X\in\mx(M)$, $e\in\Gamma(E)$.
  \end{lemma}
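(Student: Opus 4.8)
The plan is to establish existence by an explicit construction and then to identify the difference of two extensions as a tensor subject to two natural constraints. For existence, I would first isolate the two ``building blocks'' appearing in Definition \ref{def_nabla_p}, namely a connection on $K$ and a connection on $E/K$ that restricts to $\nabla^p$ along $F_M$. Concretely, choose a complement $C\subseteq E$ to $K$, which gives a splitting $E=K\oplus C$, an isomorphism $C\cong E/K$, and hence a splitting $\sigma\colon E/K\to E$ of the quotient map together with the projection $\pr_K\colon E\to K$. Independently, choose a complement $H$ to $F_M$ in $TM$, so that $TM=F_M\oplus H$ with projections $\pr_{F_M}$ and $\pr_H$. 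Picking any connection $\nabla^0$ on $E/K$, I would set $\hat\nabla_X\bar e=\nabla^p_{\pr_{F_M}X}\bar e+\nabla^0_{\pr_H X}\bar e$; the two summands are tensorial in $X$ and each satisfies the Leibniz rule in $\bar e$, so this is a connection on $E/K$ that equals $\nabla^p$ for $X\in\Gamma(F_M)$. Finally, choosing any connection $\tilde\nabla$ on $K$, define $\nabla_Xe=\tilde\nabla_X(\pr_Ke)+\sigma(\hat\nabla_X\bar e)$ (an alternative, should one prefer not to choose global splittings, is to build local extensions and average them with a partition of unity, since both defining conditions of an extension are affine).

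It then remains to check that $\nabla$ is an extension in the sense of Definition \ref{def_nabla_p}. For $e\in\Gamma(K)$ one has $\pr_Ke=e$ and $\bar e=0$, so $\nabla_Xe=\tilde\nabla_Xe\in\Gamma(K)$, which gives condition (1). Since $\overline{\sigma(v)}=v$, the induced quotient connection is $\bar\nabla=\hat\nabla$, which by construction equals $\nabla^p$ on $\Gamma(F_M)$, giving condition (2). The Leibniz rule for $\nabla$ follows from those for $\tilde\nabla$ and $\hat\nabla$ together with the identity $e=\pr_Ke+\sigma(\bar e)$; I expect this verification to be entirely routine.

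For the second statement, let $\nabla^1,\nabla^2$ be two extensions and put $\phi=\nabla^1-\nabla^2$. The Leibniz terms cancel, so $\phi$ is $C^\infty(M)$-bilinear, i.e.\ $\phi\in\Gamma(T^*M\otimes E^*\otimes E)$. Now I would extract the two constraints: condition (1) applied to both connections yields $\phi(X,k)\in\Gamma(K)$ for every $k\in\Gamma(K)$, so $\phi(X,\cdot)$ descends in its second argument to $E/K$; condition (2) yields, for $Y\in\Gamma(F_M)$, that $\overline{\phi(Y,e)}=\nabla^p_Y\bar e-\nabla^p_Y\bar e=0$, i.e.\ $\phi(Y,e)\in\Gamma(K)$, so $\phi$ descends in its first argument modulo $F_M$. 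These are precisely the two conditions needed for $\bar\phi(\bar X)(\bar e)=\overline{\phi(X,e)}$ to be well defined, producing $\bar\phi\in\Gamma(\Hom(TM/F_M,\End(E/K)))$. There is no serious obstacle here; the one point I would flag explicitly is that \emph{both} descents must be verified and that each of them corresponds to exactly one of the two defining properties of an extension, so that the bookkeeping matching tensorial constraints to Definition \ref{def_nabla_p} is where the care lies.
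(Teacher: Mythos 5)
Your proposal is correct and follows essentially the same route as the paper: both construct the extension by choosing a splitting $\sigma\colon E/K\to E$, a complement $H$ of $F_M$ in $TM$, an auxiliary connection on $K$ and one on $E/K$ over $H$, and then gluing via $\nabla_Xe=\tilde\nabla_X(e-\sigma(\bar e))+\sigma(\bar\nabla_X\bar e)$. The argument for the difference tensor $\phi$ and its two descents (along $K$ in the second slot, along $F_M$ in the first) is likewise the same as in the paper.
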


\begin{proof}
  Choose any $TM$-connection $\nabla^K$ on $K$. Choose any smooth
  complement $H$ of $F_M$ in $TM$ and an arbitrary connection
  $\nabla^H\colon\Gamma(H)\times\Gamma(E/K)\to\Gamma(E/K)$.  Then define
  $\bar\nabla\colon\mx(M)\times\Gamma(E/K)\to\Gamma(E/K)$ by
  \[ \bar\nabla_{X+Y}\bar e=\nabla^p_X\bar e+\nabla^H_Y\bar e
  \]
  for $X\in\Gamma(F_M)$, $Y\in \Gamma(H)$ and $\bar e\in\Gamma(E/K)$.
  A choice of splitting $\sigma\colon E/K\to E$ of the short exact sequence
  \[ 0\rightarrow K\rightarrow E\overset{\pr}{\rightarrow} E/K\rightarrow 0
  \]
  gives an isomorphism $E\simeq K\oplus E/K$,
  $E\ni e\simeq (e-\sigma(\bar e)) +\bar e\in K\oplus E/K$.  Set
  $\nabla\colon\mx(M)\times\Gamma(E)\to\Gamma(E)$,
  $\nabla_Xe=\nabla^K_X(e-\sigma(\bar
  e))+\sigma(\bar\nabla_X\bar e)$ for $X\in\mx(M)$ and
  $e\in\Gamma(E)$. The linear connection $\nabla$ is an
  extension of $\nabla^p$.

  \medskip

  Assume that $\nabla'\colon\mx(M)\times\Gamma(E)\to\Gamma(E)$ is a
  second extension of $\nabla^p$. Then as usual
  $\nabla-\nabla'=:\phi$ is a section of
  $T^*M\otimes E^*\otimes E$.  By definition of an extension,
  $\phi(X,k)=\nabla_Xk-\nabla'_Xk\in\Gamma(K)$ for all
  $X\in\mx(M)$ and $k\in\Gamma(K)$. Hence there is an induced
  $\tilde\phi\in\Gamma(T^*M\otimes(E/K)^*\otimes E/K)$,
  $\tilde \phi(X,\bar
  e)=\overline{\phi(X,e)}=\overline{\nabla_Xe-\nabla'_Xe}$ for
  all $X\in\mx(M)$ and $e\in\Gamma(E)$. Now
  $\overline{\nabla_Xe-\nabla'_Xe}=\nabla^p_X\bar
  e-\nabla^p_X\bar e=0$ for all $X\in\Gamma(F_M)$ and $e\in\Gamma(E)$
  yields the existence of $\bar\phi$ as in the statement.
  \end{proof}

\bigskip

Return to the situation of Proposition \ref{existence_of_connection}.
Choose again $X\in \mx(M)$ a $\nabla^{F_M}$-flat vector field, i.e.~a
vector field such that $X\sim_{\varphi_0}X'$ for some $X'\in\mx(M')$;
see \cite{JoOr14}.  Choose also $Y$ a section of $F_M$ and
$\bar e\in\Gamma(E/K)$ a $\nabla^\varphi$-flat section, i.e.~such that
$e\sim_\varphi e'$ for some $e'\in\Gamma(E')$.

Then on the one hand $[Y,X]\in\Gamma(F_M)$ by definition of
the Bott connection, and so
\[
\bar\nabla_X\nabla_Y^\varphi\bar e+\bar\nabla_{[Y,X]}\bar e
=\bar\nabla_X(0)+\nabla^\varphi_{[Y,X]}\bar e=0.\]
On the other hand, 
\[\nabla^\varphi_Y(\bar\nabla_X\bar
  e)=\nabla^\varphi_Y\overline{\nabla_Xe}=0\] since $\nabla$ projects under $\varphi$
to $\nabla'$, so $\nabla_Xe\sim_\varphi\nabla'_{X'}e'$ and
$\overline{\nabla_Xe}$ is therefore necessarily $\nabla^\varphi$-flat.
Hence, the condition that $\nabla$ projects to a connection $\nabla'$
reads as follows.
\begin{lemma}\label{cond_for_proj}
  Let $\varphi\colon E\to E'$ be a fibration of vector bundles over
  $\varphi_0\colon M\to M'$.  If a connection
  $\nabla\colon \mx(M)\times\Gamma(E)\to\Gamma(E)$ projects to a
  connection $\nabla'\colon \mx(M')\times\Gamma(E')\to\Gamma(E')$, then 
  $\nabla$ is an extension of $\nabla^\varphi$ and 
\begin{equation} \label{alpha_def}
\nabla^\varphi_Y(\bar\nabla_X\bar
e) -\bar\nabla_X\nabla_Y^\varphi\bar e-\bar\nabla_{[Y,X]}\bar e=0
\end{equation}
for all $\nabla^{F_M}$-flat vector fields $X\in \mx(M)$, for all
$Y\in\Gamma(F_M)$ and all $\nabla^\varphi$-flat sections $\bar
e\in\Gamma(E/K)$.
\end{lemma}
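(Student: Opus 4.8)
The plan is to treat the two assertions separately. That $\nabla$ is an extension of $\nabla^\varphi$ carries no new content here: it is exactly the conclusion of Proposition \ref{prop2.2}, so I would simply cite that result. The real work goes into the mixed curvature-type identity \eqref{alpha_def}, and my strategy is to show that each of its three summands vanishes on its own for the specified flat arguments.

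So I would fix a $\nabla^{F_M}$-flat vector field $X\in\mx(M)$, say $X\sim_{\varphi_0}X'$, a section $Y\in\Gamma(F_M)$, and a $\nabla^\varphi$-flat section $\bar e\in\Gamma(E/K)$ with representative $e\sim_\varphi e'$. The middle term is immediate: since $Y\in\Gamma(F_M)$ annihilates $\nabla^\varphi$-flat sections, $\nabla^\varphi_Y\bar e=0$, whence $\bar\nabla_X\nabla^\varphi_Y\bar e=\bar\nabla_X 0=0$. For the bracket term, I would first note that $[Y,X]\in\Gamma(F_M)$ by definition of the Bott connection, using that $X$ is $\nabla^{F_M}$-flat and $Y\in\Gamma(F_M)$; the fact that $\nabla$ extends $\nabla^\varphi$ (Definition \ref{def_nabla_p}) then permits replacing $\bar\nabla_{[Y,X]}$ by $\nabla^\varphi_{[Y,X]}$, and this annihilates the flat section $\bar e$. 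Hence $\bar\nabla_{[Y,X]}\bar e=0$ as well.

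The only step that uses the full projectability hypothesis is the first term $\nabla^\varphi_Y(\bar\nabla_X\bar e)$, and this is where I expect the crux of the argument to lie. Since $\nabla$ projects to $\nabla'$ and $X\sim_{\varphi_0}X'$, $e\sim_\varphi e'$, I would invoke $\nabla_X e\sim_\varphi\nabla'_{X'}e'$ to conclude that the class $\bar\nabla_X\bar e=\overline{\nabla_X e}$ is itself $\nabla^\varphi$-flat, by the very definition of $\nabla^\varphi$ as the connection killing the classes of $\varphi$-related sections. With $Y\in\Gamma(F_M)$ this at once gives $\nabla^\varphi_Y(\bar\nabla_X\bar e)=0$, and summing the three vanishing contributions produces \eqref{alpha_def}. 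No generating argument is needed, since the identity is only claimed for flat arguments. The main obstacle is thus conceptual rather than computational: one must recognise that projectability of $\nabla$ is precisely the statement that covariant differentiation preserves $\varphi$-relatedness, which is exactly what turns $\bar\nabla_X\bar e$ into a flat section and makes the first term collapse.
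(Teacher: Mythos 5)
Your proposal is correct and follows essentially the same route as the paper: the extension property is exactly Proposition \ref{prop2.2}, the second and third terms vanish because $\nabla^\varphi_Y\bar e=0$ and $[Y,X]\in\Gamma(F_M)$ lets you replace $\bar\nabla_{[Y,X]}$ by $\nabla^\varphi_{[Y,X]}$, and the first term vanishes because projectability gives $\nabla_Xe\sim_\varphi\nabla'_{X'}e'$, making $\overline{\nabla_Xe}$ a $\nabla^\varphi$-flat section. You are also right that no generating argument is needed at this stage; the paper only extends the identity to arbitrary arguments by $C^\infty(M)$-linearity after the lemma.
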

  Write $\omega(Y,X,\bar e)$ for the left-hand side
of this equation.  An easy computation shows that $\omega(Y,fX,\bar
e)=f\omega(Y,X,\bar e)$ for all $f\in C^\infty(M)$.  Since
$\nabla^{F_M}$-flat vector fields span $\mx(M)$ as
$C^\infty(M)$-module, the section $\omega(Y,X,\bar e)$
must consequently be $0$ for \emph{all} $X\in\mx(M)$. In the same manner, 
then $\omega(Y,X,f\bar e) =f\omega(Y,X,\bar e)$ and 
\eqref{alpha_def} must actually be true for all $Y\in\Gamma(F_M)$,
$X\in\mx(M)$ and $e\in\Gamma(E)$. Furthermore, $\omega(Y_1,Y_2,\bar
e)=R_{\nabla^\varphi}(Y_1,Y_2)\bar e=0$ for $Y_1,Y_2\in\Gamma(F_M)$
shows that $\omega$ can be seen as an element $\omega_{\nabla}$ of
$\Omega^1(F_M, \operatorname{Hom}(TM/F_M, \operatorname{End}(E/K)))$:
\begin{equation}\label{def_at_form}
  \omega_\nabla(Y)(\bar X)\bar e=\overline{R_\nabla(Y,X)e}.
  \end{equation}

\medskip

Note that $\nabla^{F_M}$ and $\nabla^\varphi$ (or $\nabla^p$) induce a
flat $F_M$-connection on
$\operatorname{Hom}(TM/F_M, \operatorname{End}(E/K))$, which is given
by
\[(\nabla^{\operatorname{Hom}}_Y\phi)(\bar X)(\bar e)=\nabla^\varphi_Y(\phi(\bar X)(\bar e))
-\phi(\nabla_Y^{F_M}\bar X)(\bar e)-\phi(\bar X)(\nabla_Y^{\varphi}\bar e).
\] 
For simplicity, the induced differential operator
$\dr_{\nabla^{\operatorname{Hom}}}$ on $\Omega^\bullet(F_M, \operatorname{Hom}(TM/F_M,
\operatorname{End}(E/K)))$ is denoted by $\dr_{\nabla^p}$.

\begin{proposition} \label{d_vanishes} Let $\nabla^p$ be a flat
  $F_M$-connection on $E/K$ and choose an extension
  $\nabla\colon\mx(M)\times\Gamma(E)\to\Gamma(E)$  of
  $\nabla^p$. Define $\omega_\nabla$ as in \eqref{def_at_form}. Then
\begin{equation}\label{closed}
\dr_{\nabla^p}\omega_{\nabla}=0
\end{equation}
and the class of $\omega_{\nabla}$ in
$H^1_{\dr_{\nabla^p}}(F_M,
\operatorname{Hom}(TM/F_M, \operatorname{End}(E/K)))$ does not depend
on the choice of the extension $\nabla$. 
\end{proposition}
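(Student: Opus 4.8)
The plan is to prove both statements by reducing everything to the curvature $R_\nabla$ of the chosen extension and exploiting two structural facts that follow from the extension property together with the flatness of $\nabla^p$. First I would record the following observations, valid for any extension $\nabla$ of $\nabla^p$. Since $\nabla$ restricts to a connection on $K$, its curvature $R_\nabla(Z,W)$ maps $\Gamma(K)$ into $\Gamma(K)$ for all $Z,W\in\mx(M)$; this is what makes $\omega_\nabla$ well defined in the $\bar e$-slot. Since the quotient connection $\bar\nabla$ restricts on $\Gamma(F_M)$ to the flat connection $\nabla^p$, we have $R_{\bar\nabla}(Y_0,Y_1)=R_{\nabla^p}(Y_0,Y_1)=0$ for $Y_0,Y_1\in\Gamma(F_M)$, and as $R_{\bar\nabla}(Y_0,Y_1)\bar e=\overline{R_\nabla(Y_0,Y_1)e}$ this means that $R_\nabla(Y_0,Y_1)$ maps all of $E$ into $K$ whenever $Y_0,Y_1\in\Gamma(F_M)$; this is what makes $\omega_\nabla$ well defined in the $\bar X$-slot. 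Finally, for a $\nabla^{F_M}$-flat vector field $X$ one has $[Y,X]\in\Gamma(F_M)$ for all $Y\in\Gamma(F_M)$, and for a $\nabla^p$-flat section $e$ one has $\nabla_Y e\in\Gamma(K)$ for all $Y\in\Gamma(F_M)$.

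For the closedness \eqref{closed} I would use that $\omega_\nabla$ and $\dr_{\nabla^p}\omega_\nabla$ are tensorial, so it suffices to evaluate $(\dr_{\nabla^p}\omega_\nabla)(Y_0,Y_1)$ on a $\nabla^{F_M}$-flat vector field $X$ and a $\nabla^p$-flat section $e$, which generate everything. Writing out the degree-one differential of the Bott connection $\nabla^{\Hom}$ and using $\nabla^{F_M}_{Y_i}\bar X=0$, $\nabla^p_{Y_i}\bar e=0$ together with $\bar\nabla_{Y_i}|_{F_M}=\nabla^p_{Y_i}$, the three terms collapse to
\[(\dr_{\nabla^p}\omega_\nabla)(Y_0,Y_1)(\bar X)(\bar e)=\overline{\nabla_{Y_0}(R_\nabla(Y_1,X)e)-\nabla_{Y_1}(R_\nabla(Y_0,X)e)-R_\nabla([Y_0,Y_1],X)e}.\]
The heart of the argument is then to invoke the second Bianchi identity for $\nabla$ on $E$, cyclic over $Y_0,Y_1,X$, to rewrite the right-hand side. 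After the Bianchi substitution each surviving term lands in $K$ and so projects to zero: the term involving $R_\nabla(Y_0,Y_1)$ vanishes because $R_\nabla(Y_0,Y_1)$ maps into $K$ and $\nabla$ preserves $K$; the terms $R_\nabla([Y_1,X],Y_0)e$ and $R_\nabla([X,Y_0],Y_1)e$ vanish because $[Y_i,X]\in\Gamma(F_M)$, so both curvature arguments lie in $F_M$; and the two terms $R_\nabla(Y_1,X)\nabla_{Y_0}e$ and $R_\nabla(Y_0,X)\nabla_{Y_1}e$ vanish because $\nabla_{Y_i}e\in\Gamma(K)$ and $R_\nabla$ preserves $K$. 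This yields \eqref{closed}.

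For independence of the extension I would take a second extension $\nabla'$ and set $\phi=\nabla-\nabla'\in\Omega^1(M,\End(E))$ as in Lemma \ref{ex_of_ext}, producing $\bar\phi\in\Gamma(\Hom(TM/F_M,\End(E/K)))$, which I read as a $0$-form. The claim is that $\omega_\nabla-\omega_{\nabla'}=\dr_{\nabla^p}\bar\phi$. To verify it I would start from the standard curvature-difference formula $R_\nabla(Y,X)-R_{\nabla'}(Y,X)={\nabla'}^{\End}_Y\phi(X)-{\nabla'}^{\End}_X\phi(Y)-\phi([Y,X])+[\phi(Y),\phi(X)]$, apply it to $e$ and project to $E/K$, again on a $\nabla^{F_M}$-flat $X$ and a $\nabla^p$-flat $e$. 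Here the two extensions agree on the quotient over $F_M$, so $\phi(Y)$ and $\phi([Y,X])$ map $E$ into $K$ for $Y\in\Gamma(F_M)$; together with $\phi(X)$ preserving $K$, this kills every term except $\overline{({\nabla'}^{\End}_Y\phi(X))e}=\nabla^p_Y(\overline{\phi(X)e})$, which is exactly $(\nabla^{\Hom}_Y\bar\phi)(\bar X)(\bar e)=(\dr_{\nabla^p}\bar\phi)(Y)(\bar X)(\bar e)$. Hence the two representatives differ by an exact form and define the same class.

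I expect the main obstacle to be the closedness part: one must choose the correct specialization to flat $X$ and $e$, apply the second Bianchi identity in the right cyclic form, and then carefully check that each of the several resulting curvature terms takes values in $K$. The bookkeeping of which arguments lie in $F_M$ and which sections lie in $K$ is where an error is most likely to creep in; the independence statement is then a routine variant of the usual argument that such a characteristic class does not depend on the chosen connection.
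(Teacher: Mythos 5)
Your argument is correct, and for the closedness part it takes a genuinely different route from the paper. The paper's proof works with \emph{arbitrary} $\bar X$ and $\bar e$, rewrites $\omega_\nabla(Y)(\bar X)(\bar e)$ as the mixed curvature $\nabla^p_Y\bar\nabla_X\bar e-\bar\nabla_X\nabla^p_Y\bar e-\bar\nabla_{[Y,X]}\bar e$ of the quotient connection, expands all of $\dr_{\nabla^p}\omega_\nabla$ by brute force, and after massive cancellation is left with $R_{\nabla^p}(Y_1,Y_2)(\bar\nabla_X\bar e)-\bar\nabla_X(R_{\nabla^p}(Y_1,Y_2)\bar e)+\bar\nabla_{[Y_2,[Y_1,X]]+[[Y_1,Y_2],X]-[Y_1,[Y_2,X]]}\bar e$, which vanishes by flatness of $\nabla^p$ and the Jacobi identity. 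You instead specialize to $\nabla^{F_M}$-flat $X$ and $\nabla^p$-flat $e$ (legitimate, since $\dr_{\nabla^p}\omega_\nabla$ is tensorial in those slots and such sections generate locally), collapse the differential to three curvature terms, and invoke the second Bianchi identity $d^\nabla R_\nabla=0$ on $E$, after which each residual term is checked to take values in $K$ -- using exactly the right facts: $R_\nabla$ preserves $K$, $R_\nabla(F_M,F_M)\subseteq\Hom(E,K)$, $[Y,X]\in\Gamma(F_M)$ for projectable $X$, and $\nabla_Y e\in\Gamma(K)$ for flat $e$. This is more conceptual and makes transparent \emph{why} the form is closed (Bianchi plus the extension property), at the cost of the reduction-to-generators step; the paper's computation is self-contained but opaque. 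Your Jacobi identity is in effect the same identity the paper uses, just packaged as Bianchi. For the independence of the extension the paper merely states $\omega_\nabla=\omega_{\nabla'}+\dr_{\nabla^p}\bar\phi$ and leaves the check to the reader; your verification via the curvature-difference formula, with the observations that $\phi(Y)$ and $\phi([Y,X])$ map $E$ into $K$ for $Y\in\Gamma(F_M)$ and that $\phi(X)$ preserves $K$, is a correct way to supply it.
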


\begin{proof}
For $Y_1, Y_2\in\Gamma(F_M)$ the value of
$(\dr_{\nabla^p}\omega_{\nabla})(Y_1,Y_2)$ at $\bar
X\in\Gamma(TM/F_M)$ and $\bar e\in\Gamma(E/K)$ is 
\begin{equation*}
\begin{split}
&(\nabla_{Y_1}^{\operatorname{Hom}}(\omega_{\nabla}(Y_2)))(\bar X)(\bar e)-
(\nabla_{Y_2}^{\operatorname{Hom}}(\omega_{\nabla}(Y_1)))(\bar X)(\bar e)
-(\omega_{\nabla}[Y_1,Y_2])(\bar X)(\bar e)\\
=&\nabla^p_{Y_1}(\omega_{\nabla}(Y_2)(\bar X)(\bar e))-\omega_{\nabla}(Y_2)(\bar X)(\nabla^p_{Y_1}\bar e)
-\omega_{\nabla}(Y_2)(\nabla^{F_M}_{Y_1}\bar X)(\bar e)\\
&-\nabla^p_{Y_2}(\omega_{\nabla}(Y_1)(\bar X)(\bar e))+\omega_{\nabla}(Y_1)(\bar X)(\nabla^p_{Y_2}\bar e)
+\omega_{\nabla}(Y_1)(\nabla^{F_M}_{Y_2}\bar X)(\bar e)-(\omega_{\nabla}[Y_1,Y_2])(\bar X)(\bar e)\\
=&\nabla^p_{Y_1}\nabla^p_{Y_2}\bar\nabla_{X}\bar e-\cancel{\nabla^p_{Y_1}\bar\nabla_{X}\nabla^p_{Y_2}\bar e}
-\cancel{\nabla_{Y_1}^p\bar\nabla_{[Y_2,X]}\bar e}
-\cancel{\nabla^p_{Y_2}\bar\nabla_{X}\nabla^p_{Y_1}\bar e}+\bar\nabla_{X}\nabla^p_{Y_2}\nabla^p_{Y_1}\bar e
+\cancel{\bar\nabla_{[Y_2,X]}\nabla_{Y_1}^p\bar e}\\
&-\cancel{\nabla^p_{Y_2}\bar\nabla_{[Y_1,X]}\bar e}+\cancel{\bar\nabla_{[Y_1,X]}\nabla^p_{Y_2}\bar e}
+\bar\nabla_{[Y_2,[Y_1,X]]}\bar e\\
&-\nabla^p_{Y_2}\nabla^p_{Y_1}\bar\nabla_{X}\bar e+\cancel{\nabla^p_{Y_2}\bar\nabla_{X}\nabla^p_{Y_1}\bar e}
+\cancel{\nabla_{Y_2}^p\bar\nabla_{[Y_1,X]}\bar e}
+\cancel{\nabla^p_{Y_1}\bar\nabla_{X}\nabla^p_{Y_2}\bar e}-\bar\nabla_{X}\nabla^p_{Y_1}\nabla^p_{Y_2}\bar e
-\cancel{\bar\nabla_{[Y_1,X]}\nabla_{Y_2}^p\bar e}\\
&+\cancel{\nabla^p_{Y_1}\bar\nabla_{[Y_2,X]}\bar e}-\cancel{\bar\nabla_{[Y_2,X]}\nabla^p_{Y_1}\bar e}
-\bar\nabla_{[Y_1,[Y_2,X]]}\bar e\\
&-\nabla^p_{[Y_1,Y_2]}\bar\nabla_X\bar e+\bar\nabla_{X}\nabla^p_{[Y_1,Y_2]}\bar e+\bar\nabla_{[[Y_1,Y_2],X]}\bar e\\
=&R_{\nabla^p}(Y_1,Y_2)(\bar\nabla_{X}\bar e)-\bar\nabla_{X}(R_{\nabla^p}(Y_1,Y_2)\bar e)
+\bar\nabla_{[Y_2,[Y_1,X]]+[[Y_1,Y_2],X]-[Y_1,[Y_2,X]]}\bar e=0.
\end{split}
\end{equation*}
The last equality uses the flatness of $\nabla^p$ and the Jacobi
identity.  The second statement follows from the second part of Lemma
\ref{ex_of_ext}: assume that $\nabla'$ is a second extension of
$\nabla^p$, then the reader is invited to check that
\[ \omega_\nabla=\omega_{\nabla'}+\dr_{\nabla^p}\bar\phi,
\]
with
$\bar\phi\in\Gamma(\Hom(TM/F_M,\End(E/K)))=\Omega^0(F_M,\Hom(TM/F_M,\End(E/K)))$
the form induced as in Lemma \ref{ex_of_ext} by the difference
$\phi=\nabla-\nabla'$.
\end{proof}

By Proposition \ref{d_vanishes}, the cohomology class of
$\omega_\nabla$ is an invariant of the flat connection $\nabla^p$.
\begin{definition}
  Let $\nabla^p$ be a flat $F_M$-connection on $E/K$. Choose an
  extension $\nabla\colon\mx(M)\times\Gamma(E)\to\Gamma(E)$. Then the
  \emph{Atiyah class of $\nabla^p$} is the class
  \[\alpha_{\nabla^p}=[\omega_{\nabla}] \in
    H^1_{\dr_{\nabla^p}}(F_M, \operatorname{Hom}(TM/F_M,
    \operatorname{End}(E/K))).\] Alternatively, this class is also the
  \emph{Atiyah class of the infinitesimal ideal system
    $(F_M,K,\nabla^p)$ in the vector bundle $E\to M$}.
\end{definition}
The triple $(F_M,K,\nabla^p)$ defines an involutive, linear subbundle
$F_E\subseteq TE$ \cite{JoOr14}, and so a right-invariant foliation on
the principal bundle of frames of $E$. If $K=\{0\}$ then it is
horizontal and the Atiyah class above coincide with Molino's Atiyah
class of a foliated bundle \cite{Molino71b}.

The definition of the Atiyah class of a flat partial connection is
motivated by the following theorem, the proof of which is now easy to
complete using Proposition \ref{existence_of_connection} and Lemma \ref{cond_for_proj}.
\begin{theorem}
  Assume that $\varphi\colon E\to E'$ is a fibration of vector bundles
  over $\varphi_0\colon M\to M'$. Consider the flat connection
  $\nabla^\varphi$ defined by the fibration as in Proposition \ref{prop2.2}.
  Then $\alpha_{\nabla^\varphi}$ vanishes.
\end{theorem}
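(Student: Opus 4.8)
The plan is to exploit the three preparatory results just assembled: the existence of a projectable lift (Proposition \ref{existence_of_connection}), the identification of the projectability condition with the vanishing of the curvature representative (Lemma \ref{cond_for_proj}), and the extension-independence of the Atiyah class (Proposition \ref{d_vanishes}). The strategy is simply to exhibit one extension of $\nabla^\varphi$ whose representative $\omega_\nabla$ vanishes identically, and then to invoke well-definedness of the class to conclude.

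First I would fix an arbitrary linear connection $\nabla'\colon\mx(M')\times\Gamma(E')\to\Gamma(E')$ on the base bundle $E'\to M'$; such a connection always exists. Applying Proposition \ref{existence_of_connection} produces a linear connection $\nabla\colon\mx(M)\times\Gamma(E)\to\Gamma(E)$ that projects to $\nabla'$, in the sense that $\nabla_Xe\sim_\varphi\nabla'_{X'}e'$ whenever $X\sim_{\varphi_0}X'$ and $e\sim_\varphi e'$. By Lemma \ref{cond_for_proj} this $\nabla$ is automatically an extension of $\nabla^\varphi$ in the sense of Definition \ref{def_nabla_p}, so it is a legitimate choice for computing the Atiyah class.

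The heart of the argument is then the observation, already recorded after Lemma \ref{cond_for_proj}, that the representative $\omega_\nabla$ attached to this extension is exactly the left-hand side $\omega(Y,X,\bar e)$ of \eqref{alpha_def}. Indeed, since $\nabla$ extends $\nabla^\varphi$ one has $\bar\nabla_Y=\nabla^\varphi_Y$ for $Y\in\Gamma(F_M)$, so that $\overline{R_\nabla(Y,X)e}=\nabla^\varphi_Y(\bar\nabla_X\bar e)-\bar\nabla_X\nabla^\varphi_Y\bar e-\bar\nabla_{[Y,X]}\bar e$. Lemma \ref{cond_for_proj} asserts precisely that this expression vanishes for $\nabla^{F_M}$-flat $X$ and $\nabla^\varphi$-flat $\bar e$, and the tensoriality computation following it upgrades this to the identical vanishing of $\omega_\nabla$ as an element of $\Omega^1(F_M,\Hom(TM/F_M,\End(E/K)))$.

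Consequently $\alpha_{\nabla^\varphi}=[\omega_\nabla]=0$. The only point requiring care is the identification in the previous paragraph, but this is not a genuine obstacle: it is merely the reinterpretation of the projectability equation as the curvature form $\omega_\nabla$, which was carried out when $\omega_\nabla$ was first introduced. In effect, Propositions \ref{existence_of_connection} and \ref{d_vanishes} together with Lemma \ref{cond_for_proj} have already done all the work, since the existence of a projectable lift furnishes an extension whose curvature representative is forced to be zero, and the extension-independence of the class promotes this to the vanishing of $\alpha_{\nabla^\varphi}$.
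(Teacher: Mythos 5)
Your proof is correct and follows exactly the route the paper intends: it explicitly states that the theorem is ``easy to complete using Proposition \ref{existence_of_connection} and Lemma \ref{cond_for_proj}'', which is precisely your argument of producing a projectable lift, identifying its curvature representative $\omega_\nabla$ with the left-hand side of \eqref{alpha_def}, using the tensoriality discussion to get identical vanishing, and concluding via the well-definedness of the class from Proposition \ref{d_vanishes}.
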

This yields immediately the following obstructions to a flat
$F_M$-connection on $E/K$ defining a fibration.
\begin{corollary}\label{quotient_cor}
  Let $E\to M$ be a smooth vector bundle, $K\subseteq E$ a subbundle
  and $F_M\subseteq TM$ an involutive subbundle.  Let $\nabla^p\colon
  \Gamma(F_M)\times\Gamma(E/K)\to\Gamma(E/K)$ be a flat connection.
  If $\nabla^p$ induces a fibration of vector bundles $E\to
  (E/K)/\nabla^p$ over $M\to M/F_M$, then $\alpha_{\nabla^p}=0$.
\end{corollary}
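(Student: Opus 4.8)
The plan is to recognise this corollary as essentially a reformulation of the preceding theorem, once the abstractly given flat connection $\nabla^p$ is identified with the connection $\nabla^\varphi$ attached to the fibration it induces. First I would invoke the cited construction of \cite{JoOr14}: since $\nabla^p$ induces a fibration of vector bundles, we obtain a surjective submersion $\varphi_0\colon M\to M':=M/F_M$ with connected fibres (the leaves of $F_M$) together with a vector bundle fibration $\varphi\colon E\to E':=(E/K)/\nabla^p$ over $\varphi_0$. Here $T^{\varphi_0}M=F_M$ precisely because the fibres of $\varphi_0$ are the leaves of the involutive subbundle $F_M$. I would then check that the kernel of $\varphi$ is exactly $K$: the fibre of $E'$ over $[m]\in M'$ is identified with $E_m/K_m$ via parallel transport along the leaves, so the map $E_m\to E_m/K_m$ induced by $\varphi$ on fibres has kernel $K_m$. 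This places us exactly in the situation of the preceding theorem, with $E/K$ playing the role of the quotient bundle appearing in the Atiyah class.

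The heart of the argument will be to show that $\nabla^\varphi=\nabla^p$ as flat $F_M$-connections on $E/K$. Both are defined by prescribing a distinguished family of flat sections: $\nabla^\varphi_Y\bar e=0$ for the projections $\bar e$ of sections $e\in\Gamma(E)$ that are $\varphi$-related to a section of $E'$, while $\nabla^p$ vanishes by definition on its $\nabla^p$-flat sections. By the construction of $(E/K)/\nabla^p$ in \cite{JoOr14} --- in which the fibres of $E'$ are the equivalence classes of $E/K$ under parallel transport along the leaves of $F_M$ --- a section $\bar e$ of $E/K$ descends to (equivalently, is $\varphi$-related to) a section of $E'$ if and only if it is $\nabla^p$-flat. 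Hence the two distinguished families coincide, and on this family $\nabla^\varphi_Y\bar e=0=\nabla^p_Y\bar e$ for all $Y\in\Gamma(F_M)$. Since the $\nabla^p$-flat sections generate $\Gamma(E/K)$ as a $C^\infty(M)$-module and both connections obey the Leibniz rule, this forces $\nabla^\varphi=\nabla^p$; in particular $\nabla^\varphi$ is the connection described in Proposition \ref{prop2.2}.

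With these identifications in place the conclusion is immediate: the preceding theorem applied to the fibration $\varphi$ gives $\alpha_{\nabla^\varphi}=0$, and since $\nabla^\varphi=\nabla^p$ their Atiyah classes agree, whence $\alpha_{\nabla^p}=0$. The only genuinely non-formal step is the identification $\nabla^\varphi=\nabla^p$, which rests entirely on the defining property of the quotient $(E/K)/\nabla^p$ from \cite{JoOr14}, namely that its sections correspond exactly to the $\nabla^p$-flat sections of $E/K$. Everything else --- the kernel computation, the equality $T^{\varphi_0}M=F_M$, and the final comparison of cohomology classes --- is routine bookkeeping.
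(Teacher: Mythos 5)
Your proposal is correct and follows exactly the route the paper intends: the paper states the corollary as an immediate consequence of the preceding theorem, and the only content to supply is the identification of $\nabla^p$ with the connection $\nabla^\varphi$ attached to the induced fibration $E\to (E/K)/\nabla^p$, which you carry out correctly via the observation that a section of $E/K$ descends to the quotient if and only if it is $\nabla^p$-flat. Your kernel computation and the equality $T^{\varphi_0}M=F_M$ are the same routine checks left implicit in the paper, so there is nothing to add.
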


\begin{corollary}\label{atiyah_simple_fol}\cite{Molino71b}
  Let $M$ be a smooth manifold and $F_M$ an involutive subbundle of
  $TM$. If $F_M$ is simple, 
  then the Atiyah class of $\nabla^{F_M}$ vanishes.
\end{corollary}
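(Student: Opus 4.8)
The plan is to realise the Bott connection $\nabla^{F_M}$ as the flat connection $\nabla^\varphi$ attached to an explicit fibration of vector bundles, and then to invoke Corollary \ref{quotient_cor}. Since $F_M$ is simple, the leaf space $M':=M/F_M$ is a smooth manifold and the projection $\varphi_0\colon M\to M'$ is a surjective submersion with connected fibres satisfying $F_M=T^{\varphi_0}M=\ker(T\varphi_0)$. First I would observe that the differential $\varphi:=T\varphi_0\colon TM\to TM'$ is a fibration of vector bundles over $\varphi_0$: because $\varphi_0$ is a submersion, $T_m\varphi_0\colon T_mM\to T_{\varphi_0(m)}M'$ is surjective for every $m$, so the induced map $TM\to\varphi_0^!TM'$ is a surjective vector bundle morphism over the identity, and its kernel is exactly $F_M$. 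Thus I take $E=TM$ and $K=F_M$, so that $E/K=TM/F_M$.

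The heart of the argument is to identify $\nabla^{F_M}$ with the connection $\nabla^\varphi$ defined by this fibration as in Proposition \ref{prop2.2}. Recall that $\nabla^\varphi_X\bar e=0$ precisely for those $e\in\Gamma(TM)$ that are $\varphi$-related to a vector field on $M'$, that is, for $\varphi_0$-projectable vector fields. For $X\in\Gamma(F_M)$ vertical and $Y\in\mx(M)$ projectable with $Y\sim_{\varphi_0}Y'$, relatedness is preserved by the bracket, so $[X,Y]\sim_{\varphi_0}[0,Y']=0$; hence $[X,Y]$ is again vertical, i.e.~a section of $F_M$, and therefore $\nabla^{F_M}_X\bar Y=\overline{[X,Y]}=0=\nabla^\varphi_X\bar Y$. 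Since the $\varphi_0$-projectable vector fields generate $\mx(M)$ as a $C^\infty(M)$-module and both connections are $F_M$-connections on $TM/F_M$, this forces $\nabla^{F_M}=\nabla^\varphi$.

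With this identification in hand, the conclusion is immediate. The fibration $\varphi=T\varphi_0$ realises $\nabla^{F_M}=\nabla^\varphi$ as the flat connection induced by a vector bundle fibration $TM\to (TM/F_M)/\nabla^{F_M}\cong TM'$ over $M\to M/F_M$; indeed the composite $TM\to TM/F_M\to (TM/F_M)/\nabla^{F_M}$ is exactly $T\varphi_0$. Corollary \ref{quotient_cor} (equivalently, the theorem preceding it) then yields $\alpha_{\nabla^{F_M}}=\alpha_{\nabla^\varphi}=0$.

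I expect the only genuinely delicate point to be the clean identification $\nabla^{F_M}=\nabla^\varphi$, together with the verification that $(TM/F_M)/\nabla^{F_M}$ is genuinely $TM'$ with trivial holonomy, as required to apply Corollary \ref{quotient_cor}. This is guaranteed by the simplicity of $F_M$: the isomorphism $TM/F_M\cong\varphi_0^!TM'$ carries the Bott connection to the pullback flat connection, whose parallel transport along paths contained in the (connected) leaves preserves the $TM'$-component and is therefore holonomy-free. Everything else is a direct appeal to the results already established.
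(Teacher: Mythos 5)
Your proposal is correct and follows exactly the route the paper intends: the corollary is stated as an immediate consequence of the theorem preceding Corollary \ref{quotient_cor}, applied to the fibration $T\varphi_0\colon TM\to T(M/F_M)$, under the identification $\nabla^{F_M}=\nabla^{\varphi}$ (which you verify correctly via projectable vector fields generating $\mx(M)$ over $C^\infty(M)$). The paper leaves these details implicit, so your write-up simply makes the intended argument explicit.
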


\subsection{Infinitesimal ideal systems}\label{prelim_iis}
This subsection discusses the notion of infinitesimal ideal system in
a general Lie algebroid, and defines the Atiyah class of such an
ideal.
\subsubsection{Main properties of infinitesimal ideal systems}
Consider an ideal $(F_M,J,\nabla)$ in a Lie algebroid $A\to M$. Since
for $j\in \Gamma(J)$, the class $\bar j=0\in\Gamma(A/J)$, (iis1) implies
that $[j,j']\in\Gamma(J)$ for all $j'\in\Gamma(J)$, i.e.~$J$ is
automatically a Lie subalgebroid of $A$.  The following
proposition reformulates the ideal condition (iis1) in the
definition of an infinitesimal ideal system.
\begin{proposition}\label{eq_iis}
Let $A\to M$ be a Lie algebroid, $F_M\subseteq TM$ an involutive
subbundle and $J\subseteq A$ a subalgebroid with $\rho(J)\subseteq
F_M$. Let $\nabla\colon \Gamma(F_M)\times\Gamma(A/J)\to\Gamma(A/J)$ be a
flat connection. Then the following are equivalent.
\begin{enumerate}
\item[(iis1)] $[a,j]\in\Gamma(J)$ 
for all $j\in\Gamma(J)$ and $a\in\Gamma(A)$ $\nabla$-parallel;
\item[(iis1')] $\nabla^J_j\bar a=\nabla_{\rho(j)}\bar a$ for all $j\in
  \Gamma(J)$ and $a\in\Gamma(A)$.
\end{enumerate}
\end{proposition}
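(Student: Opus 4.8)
The plan is to reduce both conditions to the vanishing of a single tensor field. For each $j\in\Gamma(J)$ I would introduce the operator
$D_j\colon\Gamma(A/J)\to\Gamma(A/J)$, $D_j\bar a=\nabla^J_j\bar a-\nabla_{\rho(j)}\bar a$,
which makes sense because $\rho(j)\in\Gamma(F_M)$ and $\nabla$ is an $F_M$-connection, so $\nabla_{\rho(j)}\bar a$ is defined. By definition, (iis1') is exactly the assertion that $D_j=0$ for every $j\in\Gamma(J)$, so the whole proposition becomes a statement about when $D_j$ vanishes.

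The first step is to check that $D_j$ is $C^\infty(M)$-linear in $\bar a$, hence a genuine section of $\End(A/J)$. Both of its terms obey a Leibniz rule with the \emph{same} symbol: from $[j,fa]=f[j,a]+(\rho(j)f)a$ one gets $\nabla^J_j(f\bar a)=f\nabla^J_j\bar a+(\rho(j)f)\bar a$, while $\nabla_{\rho(j)}(f\bar a)=f\nabla_{\rho(j)}\bar a+(\rho(j)f)\bar a$ is just the Leibniz rule for $\nabla$. The derivative terms cancel in the difference, giving $D_j(f\bar a)=fD_j\bar a$; thus $D_j\in\Gamma(\End(A/J))$. This tensoriality is the structural fact that drives the rest of the argument.

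With $D_j$ in hand, both implications follow by inspecting its vanishing on $\nabla$-parallel sections. The point is that (iis1) says \emph{exactly} that $D_j$ kills every $\nabla$-parallel section: for such $\bar a$ one has $\nabla_{\rho(j)}\bar a=0$, so $D_j\bar a=\nabla^J_j\bar a=-\overline{[a,j]}$, which vanishes precisely when $[a,j]\in\Gamma(J)$. The implication (iis1')$\Rightarrow$(iis1) is then immediate, since $D_j=0$ in particular kills parallel sections. For the converse (iis1)$\Rightarrow$(iis1') I would invoke the flatness of $\nabla$: the $\nabla$-parallel sections of $A/J$ generate $\Gamma(A/J)$ as a $C^\infty(M)$-module (the same fact already used for $\nabla^\varphi$ in the discussion preceding Proposition \ref{prop2.2}), and each such parallel $\bar a$ lifts to some $a\in\Gamma(A)$ to which (iis1) applies. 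A $C^\infty(M)$-linear operator vanishing on a generating set vanishes identically, so $D_j=0$, which is (iis1'). I expect this last step to be the main obstacle, since the passage from ``vanishes on parallel sections'' to ``vanishes on all sections'' is where more than the Leibniz bookkeeping is required: it rests on the existence of enough $\nabla$-parallel local sections, that is, on the flatness of $\nabla$ together with the involutivity of $F_M$.
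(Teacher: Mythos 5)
Your proof is correct and follows essentially the same route as the paper: the paper's expansion $\bar a=\sum_i f_i\bar a_i$ in a local frame of $\nabla$-parallel sections and the computation $\nabla^J_j\bar a=\sum_i\ldr{\rho(j)}(f_i)\bar a_i=\nabla_{\rho(j)}\bar a$ is exactly your observation that the difference $D_j=\nabla^J_j-\nabla_{\rho(j)}$ is tensorial and vanishes on a generating set of parallel sections. Packaging this as the vanishing of an element of $\Gamma(\End(A/J))$ is a clean reformulation, but the content, including the reliance on flatness of $\nabla$ to produce local parallel frames, is identical.
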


\begin{proof}
First assume that  $[a,j]\in\Gamma(J)$ 
for all $j\in\Gamma(J)$ and $a\in\Gamma(A)$ $\nabla$-parallel. Then
for all $\nabla$-parallel sections $\bar a$ of $A/J$:
\[\nabla^J_j\bar a=\overline{[j,a]}=0\]
for all $j\in\Gamma(J)$. That is, a $\nabla$-parallel section of $A$
is also $\nabla^J$-parallel. Since $\Gamma(A/J)$ has local basis
frames consisting of $\nabla$-parallel sections of $A/J$, see
\cite{JoOr14}, conclude as follows. Take $a\in\Gamma(A)$ defined on a
neighbourhood of $p\in M$. Then there is an open set $U\subseteq M$,
$p\in U$, smooth functions $f_1,\ldots,f_r\in C^\infty(U)$ and
$\bar a_1,\ldots, \bar a_r\in\Gamma(A/J)^\nabla$ such that
$\bar a=\sum_{i=1}f_i\bar a_i$. Compute then for $j\in \Gamma(J)$:
\[\nabla^J_j\bar a=\sum_{i=1}\ldr{\rho(j)}(f_i)\bar
a_i=\nabla_{\rho(j)}\bar a.\]

Conversely, assume that $\nabla^J_j\bar a=\nabla_{\rho(j)}\bar a$ for
all $j\in \Gamma(J)$ and $a\in\Gamma(A)$. Then for a $\nabla$-flat
section $a\in\Gamma(A)$ the class $\overline{[j,a]}$ equals
$\nabla^J_j\bar a=\nabla_{\rho(j)}\bar a=0$ and so $[j,a]\in\Gamma(J)$
for all $j\in\Gamma(J)$.
\end{proof}

This implies a simplification of the definition of an ideal in the
case $F_M=\rho(J)$.
\begin{proposition}\label{simpler_def_iis}
Let $A\to M$ be a Lie algebroid
Let $A\to M$ be a Lie algebroid, $F_M\subseteq TM$ an involutive
subbundle and $J\subseteq A$ a subbundle with $\rho(J)=
F_M$. Let $\nabla\colon \Gamma(F_M)\times\Gamma(A/J)\to\Gamma(A/J)$ be a
flat connection. 
Then (iis2) and (iis3) in the definition of an ideal follow from (iis1).
\end{proposition}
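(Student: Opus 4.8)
The plan is to exploit the hypothesis $\rho(J)=F_M$ together with the reformulation (iis1$'$) from Proposition \ref{eq_iis}. Since $\rho|_J\colon J\to F_M$ is a surjective bundle map, every section of $F_M$ is locally a $C^\infty(M)$-combination of sections of the form $\rho(j)$ with $j\in\Gamma(J)$. Both the Bott connection $\nabla^{F_M}$ and the partial connection $\nabla$ are tensorial in their $F_M$-argument, and the parallelism conditions appearing in (iis2) and (iis3) are local. Hence it suffices to test $\nabla$-parallelism and $\nabla^{F_M}$-parallelism against vector fields of the form $Y=\rho(j)$, $j\in\Gamma(J)$. This reduction is the conceptual heart of the statement; the rest is bracket bookkeeping.

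For (iis3), take $a\in\Gamma(A)$ that is $\nabla$-parallel and $j\in\Gamma(J)$. Using that $\rho$ is a morphism of Lie algebroids and the definition of the Bott connection, I compute
\[ \nabla^{F_M}_{\rho(j)}\overline{\rho(a)}=\overline{[\rho(j),\rho(a)]}=\overline{\rho([j,a])}. \]
By (iis1) and antisymmetry, $[j,a]=-[a,j]\in\Gamma(J)$, so $\rho([j,a])\in\Gamma(F_M)$ and the right-hand side vanishes. By the reduction above this yields $\nabla^{F_M}$-parallelism of $\rho(a)$, which is (iis3).

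For (iis2), take $a,b\in\Gamma(A)$ both $\nabla$-parallel and $j\in\Gamma(J)$. By (iis1$'$) I rewrite the partial connection as a Bott bracket,
\[ \nabla_{\rho(j)}\overline{[a,b]}=\nabla^J_j\overline{[a,b]}=\overline{[j,[a,b]]}, \]
and then apply the Jacobi identity $[j,[a,b]]=[[j,a],b]+[a,[j,b]]$. Since $a$ and $b$ are $\nabla$-parallel, (iis1) with antisymmetry gives $[j,a],[j,b]\in\Gamma(J)$; applying (iis1) once more — now with the $\nabla$-parallel section being $b$, resp. $a$, and the $J$-section being $[j,a]$, resp. $[j,b]$ — shows that both $[[j,a],b]$ and $[a,[j,b]]$ lie in $\Gamma(J)$ (the first up to sign). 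Hence $[j,[a,b]]\in\Gamma(J)$, the displayed expression vanishes, and $[a,b]$ is $\nabla$-parallel, which is (iis2).

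The only step requiring genuine care is the reduction in the first paragraph: one must verify that surjectivity of $\rho|_J$ really generates all local sections of $F_M$ by sections $\rho(j)$, and that tensoriality in the $F_M$-slot makes testing against this generating set sufficient to conclude the full parallelism statements. Once this is in place there is no real obstacle, since the remaining computations are forced by antisymmetry, the Jacobi identity, and repeated application of (iis1).
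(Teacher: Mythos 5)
Your proof is correct and follows essentially the same route as the paper: for both (iis2) and (iis3) one tests parallelism against $X=\rho(j)$ (which suffices since $\rho|_J\colon J\to F_M$ is a surjective bundle map over the identity, so sections of $F_M$ lift), converts $\nabla_{\rho(j)}$ into the Bott connection $\nabla^J_j$ via (iis1$'$), and finishes with the Jacobi identity and repeated use of (iis1). No meaningful difference from the paper's argument.
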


\begin{proof}
First take two $\nabla$-flat sections $a,b\in\Gamma(A)$ and take any
$X\in\Gamma(F_M)$. Then there exists $j\in\Gamma(J)$ with $\rho(j)=X$
and compute 
\[\nabla_X\overline{[a,b]}=\nabla_{\rho(j)}\overline{[a,b]}=\nabla^J_j
\overline{[a,b]}=\overline{[j,[a,b]]}
=\overline{[[j,a],b]}+\overline{[a,[j,b]]}.
\]
Since $a,b$ are $\nabla$-flat, the brackets $[j,a], [j,b]\in\Gamma(J)$, and
again since $a,b$ are $\nabla$-flat, 
$[[j,a],b]+[a,[j,b]]\in\Gamma(J)$. Therefore
$\nabla_X\overline{[a,b]}=0$,
which proves (iis2).

In the same manner, take $a\in\Gamma(A)$ a $\nabla$-flat section and
take $X\in\Gamma(F_M)$. Then there exists $j\in\Gamma(J)$ such that
$\rho(j)=X$
and  compute 
$\nabla_X^{F_M}\overline{\rho(a)}=\overline{[X,\rho(a)]}=\overline{\rho[j,a]}$.
Then $[j,a]\in\Gamma(J)$ implies $\rho[j,a]\in\Gamma(F_M)$ and so $\nabla_X^{F_M}\overline{\rho(a)}=0$.
\end{proof}

\subsubsection{Fibrations of Lie algebroids, infinitesimal ideal systems and the Atiyah class}
The following theorem shows that infinitesimal ideal systems in Lie
algebroids define quotients of Lie algebroids, up to some topological
obstructions. The paper \cite{DrJoOr15} proves that an infinitesimal
ideal system defines a sub-representation (up to homotopy) of the
adjoint representation of the Lie algebroid, after the choice of an
extension of the infinitesimal ideal system connection.  These two
results suggest that indeed, an infinitesimal ideal systems is the
right notion of ideal in a Lie algebroid. 
\begin{theorem}\label{red_lie_alg}\cite{JoOr14}
  Let $(F_M,J,\nabla)$ be an ideal in a Lie algebroid $A$. Assume that
  $\bar M=M/F_M$ is a smooth manifold and that $\nabla$ has trivial
  holonomy.  Then the vector bundle $(A/J)/\nabla\to M/F_M$ carries a
  Lie algebroid structure such that the projection $(\pi,\pi_M)$ is a
  Lie algebroid morphism.
\begin{align*}
\begin{xy}
  \xymatrix{
    A\ar[d]_{q_A}\ar[r]^{\pi}&(A/J)/\nabla\ar[d]^{[q_A]}\\
    M\ar[r]_{\pi_M}&M/F_M }
\end{xy}
\end{align*}
\end{theorem}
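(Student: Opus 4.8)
The plan is to build the Lie algebroid structure on $\bar A:=(A/J)/\nabla$ directly from chosen $\nabla$-flat representatives, and to use conditions (iis1)--(iis3) to see that every structure map descends. First I would temporarily forget the bracket on $A$ and view $(F_M,J,\nabla)$ as an infinitesimal ideal system in the underlying Abelian Lie algebroid. The discussion following Definition \ref{def_nabla_p}, together with Corollary \ref{quotient_cor}, then already produces the fibration of vector bundles $A\to\bar A$ over $\pi_M\colon M\to\bar M=M/F_M$, using that $F_M$ is simple and that $\nabla$ has trivial holonomy. The key consequence I would record is that sections of $\bar A\to\bar M$ are in bijection with $\nabla$-flat sections of $A/J$: pulling a section of $\bar A$ back along $\pi_M$ gives a $\nabla$-flat section of $A/J$, and conversely a $\nabla$-flat section descends because trivial holonomy makes leafwise parallel transport path-independent. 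Since $A/J$ admits local $\nabla$-flat frames (as already used in the proof of Proposition \ref{eq_iis}), these descended sections generate $\Gamma(\bar A)$ as a $C^\infty(\bar M)$-module, so it suffices to define the anchor and bracket on them.

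Next I would define the anchor and bracket on such sections. For the anchor: if $a\in\Gamma(A)$ has $\nabla$-flat class $\bar a\in\Gamma(A/J)$, then (iis3) says $\rho(a)$ is $\nabla^{F_M}$-flat, hence $\pi_M$-projectable to a vector field on $\bar M$, and I set $\bar\rho$ of the descent of $\bar a$ to be that projection; well-definedness reduces to the fact that changing $a$ by a section of $J$ changes $\rho(a)$ by a section of $\rho(J)\subseteq F_M$, which projects to zero. For the bracket: given two sections of $\bar A$, lift them to $\nabla$-flat sections $a,b\in\Gamma(A)$. Condition (iis2) guarantees that $\overline{[a,b]}$ is again $\nabla$-flat, so it descends to a section of $\bar A$, which I declare to be the bracket. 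The ambiguity to rule out is the lift from $A/J$ to $A$: replacing $a$ by $a+j$ with $j\in\Gamma(J)$ changes $[a,b]$ by $[j,b]=-[b,j]$, and since $b$ is $\nabla$-flat, (iis1) gives $[b,j]\in\Gamma(J)$, so $\overline{[a,b]}$ is unchanged, and symmetrically in $b$. This is precisely where (iis1) and (iis2) are needed.

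Then I would verify the Lie algebroid axioms for $(\bar A,\bar\rho,[\cdot\,,\cdot])$. The Leibniz rule follows by testing against $f\in C^\infty(\bar M)$: the pullback $\pi_M^*f$ is constant along the leaves of $F_M$, so the Leibniz rule in $A$ together with the defining identity $\rho(a)(\pi_M^*f)=(\bar\rho(\cdot)f)\circ\pi_M$ descends to the desired formula. The Jacobi identity and the anchor homomorphism property $\bar\rho[\cdot\,,\cdot]=[\bar\rho(\cdot),\bar\rho(\cdot)]$ descend directly from the corresponding identities in $A$, because the structures on $\bar A$ were defined through representatives. Finally, $(\pi,\pi_M)$ is a morphism essentially by construction: it commutes with the projections to the base, it intertwines the anchors by the very definition of $\bar\rho$ (which is (iis3) read as $T\pi_M\circ\rho=\bar\rho\circ\pi$), and it is bracket-compatible because the $\pi$-related sections are exactly the $\nabla$-flat lifts, whose brackets define the bracket on $\bar A$.

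The main obstacle is not the algebra — conditions (iis1)--(iis3) make all the descents formal — but the passage from the leafwise data to a genuinely \emph{smooth} vector bundle $\bar A\to\bar M$ with smooth structure maps. This is where the two standing hypotheses do the real work: simplicity of $F_M$ provides the smooth manifold $\bar M$ and the submersion $\pi_M$, while trivial holonomy turns the equivalence relation generated by leafwise parallel transport into one with smooth quotient and guarantees that enough globally consistent $\nabla$-flat sections exist to trivialise $\bar A$ locally. I would therefore spend most of the care on the vector-bundle statement, reusing the Abelian fibration picture of Corollary \ref{quotient_cor} to obtain smoothness of $\bar\rho$ and of the bracket from that of the local flat frames, and then treat the Lie algebroid structure as a routine descent along $\pi$.
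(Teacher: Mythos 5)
Your argument is essentially correct, but note that the paper offers no proof of Theorem \ref{red_lie_alg} to compare against: it is imported verbatim from \cite{JoOr14}. Your proposal reconstructs the standard argument of that reference: use simplicity of $F_M$ and trivial holonomy to get the quotient vector bundle and the identification of $\Gamma((A/J)/\nabla)$ with $\nabla$-flat sections of $A/J$, then descend anchor and bracket using (iis3), (iis2) and (iis1) respectively, with (iis1) killing the ambiguity $a\mapsto a+j$ in the lift from $A/J$ to $A$. Those well-definedness checks are all correct, and your observation that the only genuinely delicate point is the smoothness of the quotient bundle (which you delegate to the Abelian/vector-bundle case) is exactly the right division of labour. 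One small inaccuracy: Corollary \ref{quotient_cor} does not produce the fibration of vector bundles --- it is the obstruction statement in the converse direction; the existence of the fibration $E\to(E/K)/\nabla^p$ for $F_M$ simple and $\nabla^p$ without holonomy is the (unproved, cited) assertion in the discussion following Definition \ref{def_nabla_p}, so that step of your proof still ultimately rests on \cite{JoOr14} rather than on anything established in this paper. If you wanted a self-contained proof you would have to supply that construction (local flat frames patching into a vector bundle atlas over $M/F_M$); everything else in your outline is routine and complete.
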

Conversely, let \begin{equation*}
\begin{xy}
\xymatrix{
A\ar[r]^{\varphi}\ar[d]_{q_A}& A'\ar[d]^{q_{A'}}\\
M\ar[r]_f&M'
}
\end{xy}
\end{equation*}
be a fibration of Lie algebroids, i.e.~a Lie algebroid morphism such that
$(\varphi,\varphi_0)$ is a fibration of vector bundles.
Then $J:=\ker(\varphi)\subseteq A$ is a subalgebroid of $A$ and
$F_M=T^{\varphi_0}M\subseteq TM$ is an involutive subbundle.  The equality
$T\varphi_0\circ\rho=\rho'\circ\varphi$ yields immediately $\rho(J)\subseteq
F_M$.

Define as before the connection
$\nabla^\varphi\colon \Gamma(F_M)\times \Gamma(A/J)\to\Gamma(A/J)$ by
setting $\nabla^\varphi_X\bar a=0$ for all sections $a\in \Gamma(A)$
that are $\varphi$-related to some section $a'\in\Gamma(A')$.  Then
the properties of the Lie algebroid morphism $(\varphi, \varphi_0)$
imply that $(F_M, J, \nabla^\varphi)$ is an infinitesimal ideal system
in $A$, see \cite{JoOr14}.

Note that an ideal $(F_M,J,\nabla)$ is defined as above by the kernel
of a fibration of Lie algebroids if and only if it integrates to an
ideal system in the sense of Higgins and Mackenzie \cite{HiMa90b,
  Mackenzie05}, see \cite{JoOr14}.

\begin{example}[Flat connections on vector bundles]\label{geom_red}
  Let $E$ be a vector bundle over $M$ and $K$ a subbundle. Then any
  flat connection of an involutive subbundle $F_M\subseteq TM$ on
  $E/K$ as in \S \ref{sec_fibrations} defines an infinitesimal ideal
  system $(F_M, K, \nabla)$ in the Lie algebroid
  $(E, \rho=0, [\cdot\,,\cdot]=0)$.  Here, Theorem \ref{main} is
  Corollary \ref{quotient_cor}.

  An infinitesimal ideal system in a Lie algebroid is, by forgetting
  the Lie algebroid structure, automatically an infinitesimal ideal
  system in the underlying vector bundle.  The Atiyah class of a
  general infinitesimal ideal system is defined below as the Atiyah
  class of the infinitesimal ideal system in the underlying vector
  bundle.
\end{example}

\begin{definition}
  The \emph{Atiyah class of an infinitesimal ideal system}
  $(F_M,J,\nabla)$ in a Lie algebroid $A\to M$ is the Atiyah
  class of the flat connection $\nabla\colon\Gamma(F_M)\times\Gamma(A/J)\to\Gamma(A/J)$.
\end{definition}

By the considerations above, the Atiyah class of an infinitesimal
ideal system is really just the Atiyah class of the connection, and
defines an obstruction to the ideal defining a fibration of vector
bundles. Indeed, a review of the proof of Theorem \ref{red_lie_alg} in
\cite{JoOr14} reveals that if the fibration of vector bundles is
well-defined, then the Lie algebroid structure on the quotient comes
`for free' along. The following result follows hence immediately from
Corollary \ref{quotient_cor}.
\begin{proposition}\label{atiyah0}
  Let $(F_M,J,\nabla)$ be an ideal in a Lie algebroid $A\to M$.  If
  the quotient Lie algebroid $(A/J)/\nabla\to M/F_M$ as in Theorem
  \ref{red_lie_alg} exists, then the Atiyah class 
  \[\alpha_\nabla\in H^1(F_M,\Hom(TM/F_M,\End(A/J)))\]
  of the ideal vanishes.
  \end{proposition}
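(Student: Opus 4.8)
The plan is to discard the Lie algebroid structure entirely and reduce the statement to Corollary \ref{quotient_cor}. The key observation is that the Atiyah class $\alpha_\nabla$ of the ideal $(F_M,J,\nabla)$ is, by the definition immediately preceding the proposition, nothing but the Atiyah class of the flat $F_M$-connection $\nabla\colon\Gamma(F_M)\times\Gamma(A/J)\to\Gamma(A/J)$. Setting $E:=A$ and $K:=J$ and forgetting the bracket and anchor, this is precisely the Atiyah class of the infinitesimal ideal system $(F_M,K,\nabla)$ in the Abelian Lie algebroid $E\to M$, as recorded in Example \ref{geom_red}, and it lives in $H^1(F_M,\Hom(TM/F_M,\End(A/J)))$. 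So from the outset the entire Lie algebroid structure is irrelevant to the computation of $\alpha_\nabla$, and everything takes place in the setting of \S\ref{sec_fibrations}.

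Next I would feed in the hypothesis. The assumption that the quotient Lie algebroid $(A/J)/\nabla\to M/F_M$ of Theorem \ref{red_lie_alg} exists means exactly that $M/F_M$ is a smooth manifold and that $\nabla$ has trivial holonomy. Under these two conditions the projection $(\pi,\pi_M)$ is a fibration of Lie algebroids, and hence in particular a fibration of vector bundles. In other words, the flat connection $\nabla^p:=\nabla$ induces a fibration of vector bundles $E\to(E/K)/\nabla^p$ over $M\to M/F_M$ in the sense of \S\ref{sec_fibrations}, with $\nabla^p=\nabla^\varphi$ for $\varphi=\pi$.

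Having established this, the conclusion is immediate from Corollary \ref{quotient_cor}: applying it with $E=A$, $K=J$ and $\nabla^p=\nabla$ gives $\alpha_\nabla=\alpha_{\nabla^p}=0$. The only genuine point to verify is the claim made in the second step, namely that the existence of the quotient \emph{Lie algebroid} really does supply the underlying fibration of \emph{vector bundles} needed to invoke Corollary \ref{quotient_cor}. This is exactly the content of the remark preceding the proposition: the conditions guaranteeing the vector-bundle fibration ($F_M$ simple and $\nabla$ without holonomy) coincide with the hypotheses of Theorem \ref{red_lie_alg}, and a review of the proof of that theorem shows that the Lie algebroid structure on the quotient comes `for free' once the vector-bundle quotient is well-defined. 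I expect this matching of hypotheses to be the main conceptual step; once it is noted, the remainder is a direct citation of Corollary \ref{quotient_cor}.
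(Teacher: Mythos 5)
Your proposal is correct and follows essentially the same route as the paper: the paragraph preceding the proposition observes that the Atiyah class of the ideal is just the Atiyah class of the underlying flat connection on the vector bundle $A/J$ (the Lie algebroid structure being irrelevant), that the quotient Lie algebroid exists precisely when the underlying vector bundle fibration does, and then cites Corollary \ref{quotient_cor}. Your identification of the hypothesis-matching as the only genuine point to check is exactly where the paper places the weight of the argument as well.
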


  The remainder of this section describes two natural examples of ideals.

\begin{example}[The usual naive notion of ideal in a Lie algebroid]\label{naive_ideal}
  A (naive) ideal $I$ in a Lie algebroid $A\to M$ is a subbundle over
  $M$ such that $[a,i]\in\Gamma(I)$ for all $i\in\Gamma(I)$ and all
  $a\in\Gamma(A)$.  The inclusion $I\subseteq\ker(\rho)$ follows
  immediately and shows that this definition of an ideal is very
  restrictive. These usual ideals correspond obviously to the
  infinitesimal ideal systems $(F_M=0,J=I,\nabla=0)$ in $A$. In
  particular, an ideal in a Lie algebra is an infinitesimal ideal
  system. In this case, the quotient Lie algebroid $A/I$ over
  $M/F_M=M$ is always defined.

  It is easy to check that the Atiyah class of the Lie pair $(A,I)$ is
  zero, which coincides with the fact that the quotient Lie algebroid
  $A/I\to M$ is always defined.
\end{example}

\begin{example}[The Bott connection and reduction by simple  foliations]\label{bott_ideal}
  A standard example of a Lie algebroid is the tangent space $TM$ of a
  smooth manifold $M$, endowed with the usual Lie bracket of vector
  fields and the identity $\id_{TM}$ as anchor.  Consider an
  involutive subbundle $F_M\subseteq TM$ and the Bott connection
\[\nabla^{F_M}\colon\Gamma(F_M)\times\Gamma(TM/F_M)\to\Gamma(TM/F_M)\]
associated to it. Then Propositions \ref{eq_iis} and
\ref{simpler_def_iis} show that $(F_M,F_M,\nabla^{F_M})$ is an ideal
in $TM$.  If the quotient Lie algebroid exists, then it is isomorphic
to $T(M/F_M)\to M/F_M$, see \cite{JoOr14}.
For this class of infinitesimal ideal system, Theorem \ref{main} is
Corollary \ref{atiyah_simple_fol}.
\end{example}

\section{Geometric interpretation of the Atiyah class of a Lie pair}\label{sec_proof_main}

This section proves Theorem \ref{main}. The first subsection recalls
the construction of the Atiyah class of a Lie pair $(A,J)$
\cite{ChStXu16}. The second subsection explains how it can be
constructed from an extension of an infinitesimal ideal system
structure with fiber $J$, and proves the theorem.

\subsection{The Atiyah class of a Lie pair}
Consider a Lie algebroid $A$ over a smooth manifold $M$, together with
a subalgebroid $J\subseteq A$.  The pair $(A,J)$ is a \emph{Lie pair}
\cite{ChStXu16}. Recall that the Lie pair defines the flat \emph{Bott
  connection}
\[\nabla^J\colon\Gamma(J)\times\Gamma(A/J)\to\Gamma(A/J), \quad \nabla^J_j\bar a=\overline{[a,j]}
\]
for $j\in\Gamma(J)$, and $\bar a\in\Gamma(A/J)$ the class of
$a\in\Gamma(A)$. This induces as usual a flat $J$-connection
$\nabla^{\operatorname{Hom}}$ on
$\operatorname{Hom}(A/J,\operatorname{End}(A/J))=(A/J)^*\otimes
(A/J)^*\otimes (A/J)$:
\[(\nabla^{\operatorname{Hom}}_j\phi)(\bar a_1,\bar a_2)=\nabla^J_j(\phi(\bar a_1,\bar a_2))
-\phi(\nabla_j^{J}\bar a_1,\bar a_2)-\phi(\bar a_1,\nabla_j^{J}\bar a_2).
\] 
For simplicity, $\dr_{\nabla^J}$ denotes the Koszul differential on
$\Omega^\bullet(J, (A/J)^*\otimes(A/J)^*\otimes(A/J))$ defined by this
connection.

Choose an extension
$\nabla^A\colon\Gamma(A)\times\Gamma(A)\to\Gamma(A)$ of
$\nabla^J$. That is, $\nabla^A_aj\in\Gamma(J)$ for all $a\in\Gamma(A)$
and $j\in\Gamma(J)$ and the induced quotient connection
$\overline{\nabla^A}\colon\Gamma(A)\times\Gamma(A/J)\to\Gamma(A/J)$
restricts to $\nabla^J$ on sections of $J$ in the first argument.
Then $\omega_{\nabla^A}\in\Omega^1(J,\Hom(A/J,\End(A/J)))$ defined by
\[\omega_{\nabla^A}(j)(\bar a)(\bar b)=\overline{R_{\nabla^A}(j,a)b},
  \quad j\in\Gamma(J), a,b\in\Gamma(A),
\]
satisfies $\dr_{\nabla^J}\omega_{\nabla^A}=0$ -- this works as in
Proposition \ref{d_vanishes}, see \cite{ChStXu16}.  The \emph{Atiyah
  class of the Lie pair} $(A,J)$ is the cohomology class
\[\alpha_{J}:=[\omega_{\nabla^A}]\in
  H^1_{\dr_{\nabla^J}}(J,
  \operatorname{Hom}(A/J,\operatorname{End}(A/J))),\] see
\cite{ChStXu16}.  It does not depend on the choice of the extension
$\nabla^A$ of the Bott connection $\nabla^J$.

\subsection{Proof of Theorem \ref{main}}
Consider now an infinitesimal ideal system $(F_M,J,\nabla^i)$ in
$A$. Choose an extension
$\nabla\colon\mx(M)\times\Gamma(A)\to\Gamma(A)$ of $\nabla^i$. That
is, $\nabla$ preserves sections of $J$ and quotients to a connection
$\bar\nabla\colon\mx(M)\times\Gamma(A/J)\to\Gamma(A/J)$ that restricts
to $\nabla^i$ on sections of $F_M$.

Consider the basic connection
$\nabla^{\rm bas}\colon\Gamma(A)\times\Gamma(A)\to\Gamma(A)$ induced
by $\nabla$. It is defined by
$\nabla^{\rm bas}_{a_1}a_2=[a_1,a_2]+\nabla_{\rho(a_2)}a_1$. Then a
section $a\in\Gamma(A)$ such that $\bar a\in\Gamma(A/J)$ is
$\nabla^i$-flat satisfies
\begin{equation}
\nabla^{\rm bas}_aj=[a,j]+\nabla_{\rho(j)}a\in\Gamma(J)
\end{equation}
for all $j\in\Gamma(J)$. To see this, recall that $[a,j]\in\Gamma(J)$
by the definition of an infinitesimal ideal system, and
$\rho(j)\in\Gamma(F_M)$ induces
$\overline{\nabla_{\rho(j)}a}=\bar\nabla_{\rho(j)}\bar
a=\nabla^i_{\rho(j)}\bar a=0$ in $\Gamma(A/J)$. Since $\nabla^i$-flat
sections generate all sections of $A$, one finds that $\nabla^{\rm
  bas}_aj\in\Gamma(J)$ for all $j\in\Gamma(J)$ and \emph{all}
$a\in\Gamma(A)$.
This proves the following lemma.
\begin{lemma}\label{compatibility_extensions_1}
  Consider a Lie algebroid $A$ over a smooth manifold $M$, and an
  infinitesimal ideal system $(F_M,J,\nabla^i)$ in $A$. Choose an
  extension $\nabla\colon\mx(M)\times\Gamma(A)\to\Gamma(A)$ of
  $\nabla^i$. Then the basic connection
  $\nabla^{\rm bas}\colon \Gamma(A)\times\Gamma(A)\to\Gamma(A)$
  defined by $\nabla$ satisfies $\nabla^{\rm bas} _a j\in\Gamma(J)$
  for all $a\in\Gamma(A)$ and all $j\in\Gamma(J)$.
  \end{lemma}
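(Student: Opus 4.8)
The plan is to establish the inclusion first for the sections $a$ whose class $\bar a$ in $A/J$ is $\nabla^i$-parallel, and then to propagate it to arbitrary $a\in\Gamma(A)$ by a tensoriality argument. Writing $\nabla^{\rm bas}_a j=[a,j]+\nabla_{\rho(j)}a$, the decisive structural fact is that, for fixed $j$, the map $a\mapsto\nabla^{\rm bas}_a j$ is $C^\infty(M)$-linear: substituting $fa$ for $a$ produces the Leibniz term $-\rho(j)(f)\,a$ from $[fa,j]$ and the derivation term $+\rho(j)(f)\,a$ from $\nabla_{\rho(j)}(fa)$, and these cancel, leaving $f\,\nabla^{\rm bas}_a j$. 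Thus $\nabla^{\rm bas}_{(\cdot)}j$ is tensorial in its first slot, and it is enough to verify the claim on any family of sections generating $\Gamma(A)$ as a $C^\infty(M)$-module.

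Such a family is provided by the flatness of $\nabla^i$. Around every point of $M$ the quotient $A/J$ has a local frame of $\nabla^i$-parallel sections; lifting these through a splitting of $0\to J\to A\to A/J\to 0$ and adjoining a local frame of $J$ produces local generators of $\Gamma(A)$, each of which has $\nabla^i$-parallel image in $A/J$ (the generators coming from $J$ having image the parallel section $0$). By the tensoriality just noted, it remains only to check $\nabla^{\rm bas}_a j\in\Gamma(J)$ for sections $a$ with $\bar a$ $\nabla^i$-parallel.

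For such an $a$, the two summands are handled separately. The bracket term lies in $\Gamma(J)$ directly by condition (iis1). For the connection term, $\rho(j)\in\Gamma(F_M)$ since $\rho(J)\subseteq F_M$, and because $\nabla$ is an extension of $\nabla^i$ its quotient connection $\bar\nabla$ coincides with $\nabla^i$ on sections of $F_M$; hence
\[
  \overline{\nabla_{\rho(j)}a}=\bar\nabla_{\rho(j)}\bar a=\nabla^i_{\rho(j)}\bar a=0,
\]
so that $\nabla_{\rho(j)}a\in\Gamma(J)$ as well. Both summands being in $\Gamma(J)$, the claim holds on the generators and therefore, by tensoriality, for all $a\in\Gamma(A)$.

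I expect the only genuinely delicate point to be the tensoriality step, since it is the mechanism that upgrades the statement from the $\nabla^i$-parallel generators to arbitrary sections; it hinges on the exact cancellation between the anchor-Leibniz term of the Lie algebroid bracket and the derivation term of $\nabla$. By contrast, the verification on parallel sections is an immediate application of (iis1) together with the defining properties of an extension. It is worth noting that neither (iis2) nor (iis3) enters the argument.
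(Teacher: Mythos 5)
Your proposal is correct and follows essentially the same route as the paper: verify the claim for sections $a$ whose class $\bar a$ is $\nabla^i$-parallel (where $[a,j]\in\Gamma(J)$ by (iis1) and $\overline{\nabla_{\rho(j)}a}=\nabla^i_{\rho(j)}\bar a=0$), then extend to all of $\Gamma(A)$ using that such sections generate $\Gamma(A)$ locally. The only difference is that you make explicit the $C^\infty(M)$-linearity of $a\mapsto\nabla^{\rm bas}_aj$ (the cancellation of the anchor-Leibniz and derivation terms), which the paper leaves implicit in the phrase ``since $\nabla^i$-flat sections generate all sections of $A$''.
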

As a consequence, there is a quotient connection
\[\overline{\nabla^{\rm bas}}\colon \Gamma(A)\times\Gamma(A/J)\to\Gamma(A/J), \quad 
  \overline{\nabla^{\rm bas}}_{a_1}\bar{a_2}=\overline{\nabla^{\rm
      bas}_{a_1}a_2}.\]

The following proposition shows that
$\nabla^{\rm bas}$ is an extension of the Bott connection
$\nabla^J\colon\Gamma(J)\times\Gamma(A/J)\to\Gamma(A/J)$.
\begin{proposition}\label{compatibility_extensions_2}
  Consider a Lie algebroid $A$ over a smooth manifold $M$, and an
  infinitesimal ideal system $(F_M,J,\nabla^i)$ in $A$. Choose an
  extension $\nabla\colon\mx(M)\times\Gamma(A)\to\Gamma(A)$ of
  $\nabla^i$.  Then the quotient connection
  $\overline{\nabla^{\rm bas}}$ restricts on sections of $J$ to the
  Bott connection
  $\nabla^J\colon \Gamma(J)\times\Gamma(A/J)\to\Gamma(A/J)$,
  $\nabla^J_j\bar a =\overline{[j,a]}\in \Gamma(A/J)$.
\end{proposition}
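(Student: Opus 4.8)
The plan is to compute the quotient connection $\overline{\nabla^{\rm bas}}$ directly on a pair $(j,\bar a)$ with $j\in\Gamma(J)$ and $a\in\Gamma(A)$, and then to read off the Bott connection from the result. By Lemma \ref{compatibility_extensions_1} the quotient connection $\overline{\nabla^{\rm bas}}\colon\Gamma(A)\times\Gamma(A/J)\to\Gamma(A/J)$ is already well defined, so the only thing left to do is to identify its restriction when the first slot runs over $\Gamma(J)$.

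First I would unfold the definition of the basic connection in the first argument, taking $a_1=j$ and $a_2=a$ in $\nabla^{\rm bas}_{a_1}a_2=[a_1,a_2]+\nabla_{\rho(a_2)}a_1$, which gives $\nabla^{\rm bas}_j a=[j,a]+\nabla_{\rho(a)}j$. Projecting to $A/J$ then yields
\[
  \overline{\nabla^{\rm bas}}_j\bar a
  =\overline{\nabla^{\rm bas}_j a}
  =\overline{[j,a]}+\overline{\nabla_{\rho(a)}j}.
\]

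The key step is to dispose of the second summand. Since $\nabla$ is an extension of $\nabla^i$, condition (1) of Definition \ref{def_nabla_p} guarantees that $\nabla$ preserves $\Gamma(J)$; in particular $\nabla_{\rho(a)}j\in\Gamma(J)$ because $\rho(a)\in\mx(M)$, and hence $\overline{\nabla_{\rho(a)}j}=0$. This leaves $\overline{\nabla^{\rm bas}}_j\bar a=\overline{[j,a]}=\nabla^J_j\bar a$, which is exactly the assertion.

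I do not expect a genuine obstacle here: the statement is an immediate consequence of the formula for $\nabla^{\rm bas}$ together with the $J$-preserving property that is built into the notion of an extension. The only points to keep straight are that it is the term $\nabla_{\rho(a)}j$ (and not the bracket $[j,a]$) which is controlled by the extension hypothesis, and that the proposition uses the sign convention $\nabla^J_j\bar a=\overline{[j,a]}$ for the Bott connection, so that the bracket term survives unchanged under projection.
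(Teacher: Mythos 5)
Your argument is correct and coincides with the paper's own proof: both expand $\nabla^{\rm bas}_j a=[j,a]+\nabla_{\rho(a)}j$, use that $\nabla$ preserves $\Gamma(J)$ (being an extension of $\nabla^i$) to kill $\overline{\nabla_{\rho(a)}j}$, and identify the remainder with $\nabla^J_j\bar a$. Nothing to add.
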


\begin{proof}
Choose  $j\in\Gamma(J)$ and $a\in\Gamma(A)$. Then
\begin{equation}
  \overline{\nabla^{\rm bas}}_{j}\bar{a}=\overline{\nabla^{\rm bas}_{j}a}
=\overline{[j,a]+\nabla_{\rho(a)}j}=\overline{[j,a]}=\nabla^J_j\bar a.
\end{equation}
In the third equation, $\nabla_Xj\in\Gamma(J)$ for all $X\in\mx(M)$
and $j\in\Gamma(J)$ is given by the choice of the connection $\nabla$.
\end{proof}

Hence, $\nabla^{\rm bas}$ can be used to construct the Atiyah class $\alpha_J$ of the Lie pair $(A,J)$:
\[\alpha_J=\left[\omega_{\nabla^{\rm bas}}\right]\in H^1(J, \Hom(A/J,\End(A/J)))\]
for $\nabla^{\rm bas}$ defined by an extension $\nabla$ of the
infinitesimal ideal system connection $\nabla^i$.

\medskip

The remainder of this section proves Theorem \ref{main}. The key to
this proof is the following: an infinitesimal ideal system
$(F_M,J,\nabla^i)$ in $A$ makes the anchor $\rho$ into a chain
map
\[\rho^\star\colon \left(\Omega^\bullet(F_M,\Hom(TM/F_M, \End(A/J))),
    \dr_{\nabla^i}\right)\to
  \left(\Omega^\bullet(J,\Hom(A/J, \End(A/J))), \dr_{\nabla^J}\right).
  \]
\begin{lemma}
Let $(J,A)$ be a Lie pair over a smooth manifold $M$. Let
$F_M\subseteq TM$ be an involutive subbundle with $\rho(J)\subseteq F_M$ and
let 
$\nabla\colon \Gamma(F_M)\times\Gamma(A/J)\to\Gamma(A/J)$
be a linear connection.
Then  \begin{equation*}
  \begin{split}\rho^\star&\colon \Omega^\bullet(F_M, \operatorname{Hom}(TM/F_M,\operatorname{End}(A/J)))
\to \Omega^\bullet(J,\operatorname{Hom}(A/J,\operatorname{End}(A/J))),\\
&(\rho^\star\omega)(j_1, \ldots, j_l)(\overline{a_1},
\overline{a_2})=\omega(\rho(j_1), \ldots,
\rho(j_l))(\overline{\rho(a_2)})(\overline{a_1})
\end{split}
\end{equation*}
is a well-defined degree-preserving morphism of $C^\infty(M)$-modules
and a morphism of modules over
$\rho^\star\colon \Omega^\bullet(F_M)\to\Omega^\bullet(J)$.
\end{lemma}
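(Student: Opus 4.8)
The plan is to verify that the stated formula defines a genuine element of $\Omega^\bullet(J,\Hom(A/J,\End(A/J)))$, and then that the assignment $\omega\mapsto\rho^\star\omega$ respects the relevant algebraic structures; none of these steps hides a real difficulty, the whole content being tensorial bookkeeping once the right well-definedness observation is in place.

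First I would record the one input that makes the definition legitimate: since $\rho(J)\subseteq F_M$, the anchor descends to a $C^\infty(M)$-linear bundle map $\bar\rho\colon A/J\to TM/F_M$, $\bar\rho(\bar a)=\overline{\rho(a)}$, which is independent of the chosen representative $a\in\Gamma(A)$. Because $\bar a_2$ enters the formula only through $\bar\rho(\bar a_2)=\overline{\rho(a_2)}$, and $\bar a_1$ only as the argument of an endomorphism of $A/J$, the right-hand side $\omega(\rho(j_1),\ldots,\rho(j_l))(\bar\rho(\bar a_2))(\bar a_1)$ depends only on the classes $\bar a_1,\bar a_2$. This is the only place where the hypothesis $\rho(J)\subseteq F_M$ is used.

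Next I would check that $\rho^\star\omega$ is an $l$-form with values in $\Hom(A/J,\End(A/J))$. Alternation and $C^\infty(M)$-multilinearity in $j_1,\ldots,j_l$ are inherited from the corresponding properties of $\omega$ together with the $C^\infty(M)$-linearity of $\rho$: permuting the $j_i$ permutes the $\rho(j_i)$ with the same sign, and $\rho(fj_1)=f\rho(j_1)$ pulls the function out. For fixed sections $j_1,\ldots,j_l$, tensoriality in $\bar a_1$ is immediate since $\omega(\ldots)(\bar\rho(\bar a_2))$ is a pointwise endomorphism of $A/J$; tensoriality in $\bar a_2$ follows from the $C^\infty(M)$-linearity of $\bar\rho$ and of $\omega(\ldots)$ in its $TM/F_M$-slot. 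Hence $\rho^\star\omega(j_1,\ldots,j_l)$ is a well-defined section of $\Hom(A/J,\End(A/J))$, and $\rho^\star$ is manifestly degree-preserving; that it is a morphism of $C^\infty(M)$-modules, $\rho^\star(f\omega)=f\,\rho^\star\omega$, is read off directly from the formula.

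Finally, for the module-morphism property over $\rho^\star\colon\Omega^\bullet(F_M)\to\Omega^\bullet(J)$ I would expand both sides of $\rho^\star(\beta\wedge\omega)=(\rho^\star\beta)\wedge(\rho^\star\omega)$, for $\beta\in\Omega^k(F_M)$ and $\omega\in\Omega^l(F_M,\Hom(TM/F_M,\End(A/J)))$, using the shuffle-sum definition of the wedge product. Evaluating the left-hand side at $(j_1,\ldots,j_{k+l})$ and at $(\bar a_1,\bar a_2)$ substitutes $\rho(j_i)$ for each argument and inserts $\bar\rho(\bar a_2),\bar a_1$ into the $\Hom$/$\End$-slots; since the scalar factors $\beta(\rho(j_{\sigma(1)}),\ldots)$ commute past these insertions, each summand equals $\operatorname{sgn}(\sigma)\,(\rho^\star\beta)(j_{\sigma(1)},\ldots)\cdot(\rho^\star\omega)(j_{\sigma(k+1)},\ldots)(\bar a_1,\bar a_2)$, i.e.\ the corresponding summand of the right-hand side, so the two forms agree term by term. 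The only point demanding care throughout is the transposition built into the definition -- $\bar a_2$ feeds the $TM/F_M$-slot through $\bar\rho$ while $\bar a_1$ is the vector on which the resulting endomorphism acts -- so I would fix this bookkeeping from the outset; beyond that there is no genuine obstacle.
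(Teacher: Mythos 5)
Your proposal is correct and rests on the same key observation as the paper's (much terser) proof: the hypothesis $\rho(J)\subseteq F_M$ guarantees both that the $\rho(j_i)$ are valid arguments and that the anchor descends to $\bar\rho\colon A/J\to TM/F_M$, so the formula depends only on the classes $\bar a_1,\bar a_2$. The remaining tensoriality and module-morphism checks you carry out are exactly the routine bookkeeping the paper leaves implicit.
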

\begin{proof}
  The map $\rho^\star$ is well-defined because $\rho(J)\subseteq F_M$, and $\bar a=0\in A/J$ if
  and only if $a\in\Gamma(J)$, which implies $\rho(a)\in\Gamma(F_M)$
  and so $\overline{\rho(a)}=0$ in $TM/F_M$.
\end{proof}

\begin{lemma}\label{lem2}
  Let $A\to M$ be a Lie algebroid and let $(F_M,J,\nabla)$ be an
  infinitesimal ideal system in $A$.  Then $\rho^\star$ satisfies
  $\dr_{\nabla^J}\circ\rho^\star=\rho^\star\circ\dr_{\nabla^i}$.
\end{lemma}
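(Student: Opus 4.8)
The plan is to exploit the dg-module structure of the two complexes and reduce the chain-map identity to a single degree-zero connection computation. The preceding lemma shows that $\rho^\star$ is a morphism of modules over the pullback $\rho^\star\colon\Omega^\bullet(F_M)\to\Omega^\bullet(J)$, and this latter map is the pullback of Lie algebroid forms along the anchor $\rho\colon J\to F_M$. Since $J$ is a subalgebroid with $\rho(J)\subseteq F_M$, the restriction $\rho|_J$ is a morphism of Lie algebroids over $\id_M$ (the bracket is preserved because $\rho$ is the anchor of $A$), so $\rho^\star$ commutes with the Lie algebroid de Rham differentials of $F_M$ and $J$. First I would record this fact, together with the Leibniz rule $\dr_{\nabla^i}(\eta\cdot\phi)=(\dr_{F_M}\eta)\cdot\phi+(-1)^{|\eta|}\eta\cdot\dr_{\nabla^i}\phi$ and its analogue for $\dr_{\nabla^J}$, which hold because these Koszul differentials are derivations of the module structure.

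Granting these two facts, the chain-map identity in arbitrary degree follows formally from the degree-zero case. Writing a local generator of $\Omega^\bullet(F_M,\Hom(TM/F_M,\End(A/J)))$ as $\eta\cdot\phi$ with $\eta\in\Omega^\bullet(F_M)$ and $\phi\in\Gamma(\Hom(TM/F_M,\End(A/J)))$, one applies $\dr_{\nabla^J}$ to $\rho^\star(\eta\cdot\phi)=\rho^\star\eta\cdot\rho^\star\phi$, uses the Leibniz rule, then the de Rham chain-map property on the $\eta$-factor and the degree-zero identity on the $\phi$-factor, and finally the module-morphism property in reverse to recover $\rho^\star\dr_{\nabla^i}(\eta\cdot\phi)$. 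Thus the entire content sits in degree $0$.

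For the degree-zero step I would take $\phi\in\Gamma(\Hom(TM/F_M,\End(A/J)))$, $j\in\Gamma(J)$ and $a_1,a_2\in\Gamma(A)$, write $\bar\rho\colon A/J\to TM/F_M$ for the induced anchor, and expand both $\dr_{\nabla^J}(\rho^\star\phi)(j)(\bar a_1)(\bar a_2)$ and $\rho^\star(\dr_{\nabla^i}\phi)(j)(\bar a_1)(\bar a_2)$ from the defining formula for $\rho^\star$ and the Koszul formulas for the two induced connections. Each side splits into three terms. The term differentiating the whole evaluation and the term differentiating the $\End(A/J)$-input both match at once upon invoking the reformulation (iis1') of Proposition \ref{eq_iis}, namely $\nabla^J_j\bar c=\nabla^i_{\rho(j)}\bar c$ for every $\bar c\in\Gamma(A/J)$; this is exactly the ideal condition and is the only place where the hypothesis is used. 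The remaining term, which differentiates the $\Hom$-input, requires the identity $\bar\rho(\nabla^J_j\bar a_2)=\nabla^{F_M}_{\rho(j)}\bar\rho(\bar a_2)$: both sides equal $\overline{[\rho(j),\rho(a_2)]}$, the left because $\nabla^J_j\bar a_2=\overline{[j,a_2]}$ and $\rho$ preserves brackets, the right by the definition of the Bott connection $\nabla^{F_M}$.

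The main, and essentially only, obstacle is this third term: one must recognise that the compatibility $\bar\rho\circ\nabla^J_j=\nabla^{F_M}_{\rho(j)}\circ\bar\rho$ of the two Bott connections is automatic from the anchor being a Lie algebroid morphism, and—more to the point—that (iis1') is precisely the statement needed to align the connection terms. In other words, the substance of the lemma is that the infinitesimal ideal condition is exactly what makes $\rho^\star$ intertwine the two differentials, while everything else is the formal dg-module bookkeeping carried out in the first two paragraphs.
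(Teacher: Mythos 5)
Your proof is correct and follows essentially the same route as the paper: the substance in both is that (iis1') from Proposition \ref{eq_iis} matches the $\nabla^J_j$-terms with the $\nabla^i_{\rho(j)}$-terms, while the remaining term is handled by $\bar\rho(\nabla^J_j\bar a_2)=\overline{\rho[j,a_2]}=\nabla^{F_M}_{\rho(j)}\overline{\rho(a_2)}$ and the bracket-compatibility of the anchor. Your reduction to degree zero via the dg-module structure over $\rho^\star\colon\Omega^\bullet(F_M)\to\Omega^\bullet(J)$ is just a tidier packaging of the paper's direct Koszul-formula expansion in arbitrary degree, where the bracket terms in the two sums are matched by $\rho[j_i,j_l]=[\rho(j_i),\rho(j_l)]$.
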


\begin{remark}
  Note that conversely,
  $\dr_{\nabla^J}\circ\rho^\star=\rho^\star\circ\dr_{\nabla^i}$ does not
  imply that $(F_M,J,\nabla)$ is an infinitesimal ideal system.  In
  general, the following result holds.  Let
  $\phi\in\Omega^1(J,\operatorname{End}(A/J))$ be defined by
  $\phi(j)\bar a=\nabla_{\rho(j)}\bar a-\nabla^J_j\bar a$ for all
  $j\in\Gamma(J)$ and all $a\in\Gamma(A)$.  Consider the open subset
  $U\subseteq M$ defined by
  $U:=\{p\in M\mid \rho(A(p))\not\subseteq F_M(p)\}$.  Then
  $\rho^\star$ satisfies
  $\dr_{\nabla^J}\circ\rho^\star=\rho^\star\circ\dr_{\nabla^i}$ if and
  only if there exists $\lambda\in\Gamma_U(J^*)$ such that
  $\phi\an{U}=\lambda\cdot\operatorname{id}_{A/J}$.
\end{remark}

\begin{proof}[Proof of Lemma \ref{lem2}]
  Choose
  $\omega\in\Omega^l_U(F_M,
  \operatorname{Hom}(TM/F_M,\operatorname{End}(A/J)))$. Then on the
  one hand,
\begin{equation}\label{eq1}
\begin{split}
  &\rho^\star(\dr_{\nabla^i}\omega)(j_1, \ldots,
  j_{l+1})(\overline{a_1}, \overline{a_2})
  =(\dr_{\nabla^i}\omega)(\rho(j_1), \ldots, \rho(j_{l+1}))(\overline{\rho(a_2)})(\overline{a_1})\\
  &=\sum_{i=1}^{l+1}(-1)^{i+1}\nabla^{\operatorname{Hom}}_{\rho(j_i)}(\omega(\rho(j_1),
  \ldots,\hat{i}, \ldots,
  \rho(j_{l+1})))(\overline{\rho(a_2)})(\overline{a_1})\\
  &\quad+\sum_{i<l}(-1)^{i+l}\omega([\rho(j_i),\rho(j_l)],\rho(j_1),
  \ldots,\hat{i}, \ldots, \hat{l}, \ldots
  \rho(j_{l+1}))(\overline{\rho(a_2)})(\overline{a_1}).
\end{split}
\end{equation}
On the other hand 
\begin{equation}\label{eq2}
\begin{split}
  &\dr_{\nabla^J}(\rho^\star\omega)(j_1, \ldots, j_{l+1})(\overline{a_1}, \overline{a_2})\\
  &=\sum_{i=1}^{l+1}(-1)^{i+1}\nabla^{\operatorname{Hom}}_{j_i}(\rho^\star\omega(j_1,
  \ldots,\hat{i}, \ldots, j_{l+1}))(\overline{a_1}, \overline{a_2})\\
  &\quad+\sum_{i<l}(-1)^{i+l}\rho^\star\omega([j_i,j_l], j_1,
  \ldots,\hat{i}, \ldots, \hat{l}, \ldots
  j_{l+1})(\overline{a_1}, \overline{a_2})\\
  &=\sum_{i=1}^{l+1}(-1)^{i+1}\nabla^{\operatorname{Hom}}_{j_i}(\rho^\star\omega(j_1,
  \ldots,\hat{i}, \ldots, j_{l+1}))(\overline{a_1}, \overline{a_2})\\
  &\quad+\sum_{i<l}(-1)^{i+l}\omega(\rho[j_i,j_l], \rho(j_1),
  \ldots,\hat{i}, \ldots, \hat{l}, \ldots
  \rho(j_{l+1}))(\overline{\rho(a_2)})(\overline{a_1}).
\end{split}
\end{equation}
Proposition \ref{eq_iis} and the compatibility of the Lie bracket with the
anchor give for all $j, j_1, \ldots, j_l\in\Gamma(J)$ and
$a_1, a_2\in\Gamma(A)$:
\begin{equation*}
\begin{split}
  &\nabla^{\operatorname{Hom}}_{j}(\rho^\star\omega(j_1,
  \ldots, j_{l}))(\overline{a_1})(\overline{a_2})\\
  &=
  \nabla^J_j\left(\omega(\rho(j_1),\ldots, \rho(j_{l}))(\overline{\rho(a_2)})(\overline{a_1})\right)-\omega(\rho(j_1),\ldots, \rho(j_{l}))(\overline{\rho(a_2)})(\nabla_j^J\overline{a_1})\\
  &\quad-\omega(\rho(j_1),\ldots, \rho(j_{l}))(\overline{\rho([j,a_2])})(\overline{a_1})\\
  &= \nabla^i_{\rho(j)}\left(\omega(\rho(j_1),\ldots,
    \rho(j_{l}))(\overline{\rho(a_2)})(\overline{a_1})\right)-\omega(\rho(j_1),\ldots, \rho(j_{l}))(\overline{\rho(a_2)})(\nabla_{\rho(j)}^i\overline{a_1})\\
  &-\omega(\rho(j_1),\ldots, \rho(j_{l}))(\nabla^{F_M}_{\rho(j)}\overline{\rho(a_2)})(\overline{a_1})\\
  &=\nabla^{\operatorname{Hom}}_{\rho(j)}(\omega(\rho(j_1), \ldots,
  \rho(j_{l})))(\overline{\rho(a_2)})(\overline{a_1}),
\end{split}
\end{equation*}
Hence, by \eqref{eq1} and \eqref{eq2},
$\rho^\star\circ\dr_{\nabla^i}=\dr_{\nabla^J}\circ\rho^\star$ and the
proof is complete.
\end{proof}

As a consequence, if the Lie pair $(A,J)$ carries an ideal structure
$(F_M,J,\nabla^i)$, then $\rho^\star$ induces a map in cohomology. The
following theorem shows that the Atiyah class of the Lie pair is then
the image under this map of the Atiyah class of the infinitesimal
ideal system.
\begin{theorem}\label{last_key}
  Let $A\to M$ be a Lie algebroid and let $(F_M,J,\nabla^i)$ be an ideal in $A$.
The image under $\rho^\star$ of the Atiyah class 
\[\alpha_{\nabla^i}\in H^1_{\dr_{\nabla^i}}(F_M, \operatorname{Hom}(TM/F_M,\operatorname{End}(A/J)))
\]
of the ideal is the Atiyah class of the Lie pair $(A,J)$
\[\alpha_{J}\in H^1_{\dr_{\nabla^J}}(J, \operatorname{Hom}(A/J,\operatorname{End}(A/J))).
\]
\end{theorem}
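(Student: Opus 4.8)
The plan is to exhibit, at the cochain level, a single extension that computes both Atiyah classes, and then to match the two representative cocycles directly. First I would choose an extension $\nabla\colon\mx(M)\times\Gamma(A)\to\Gamma(A)$ of the ideal connection $\nabla^i$, which exists by Lemma \ref{ex_of_ext}; by definition $\alpha_{\nabla^i}$ is then represented by $\omega_\nabla$ with $\omega_\nabla(Y)(\bar X)\bar e=\overline{R_\nabla(Y,X)e}$. On the other side I would pass to the basic connection $\nabla^{\rm bas}_{a}b=[a,b]+\nabla_{\rho(b)}a$ built from the \emph{same} $\nabla$: by Lemma \ref{compatibility_extensions_1} and Proposition \ref{compatibility_extensions_2} this $\nabla^{\rm bas}$ is an extension of the Bott connection $\nabla^J$, so $\alpha_J$ is represented by $\omega_{\nabla^{\rm bas}}$ with $\omega_{\nabla^{\rm bas}}(j)(\bar a)(\bar b)=\overline{R_{\nabla^{\rm bas}}(j,a)b}$. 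Since $\rho^\star$ is a chain map (Lemma \ref{lem2}) it descends to cohomology, so it suffices to prove the cochain identity $\rho^\star\omega_\nabla=\omega_{\nabla^{\rm bas}}$; unwinding the definitions, this reads
\[
\overline{R_\nabla(\rho(j),\rho(a_2))a_1}=\overline{R_{\nabla^{\rm bas}}(j,a_1)a_2}
\]
for all $j\in\Gamma(J)$ and $a_1,a_2\in\Gamma(A)$. The deliberate swap of $a_1,a_2$ and the passage from $A$-directions to $TM$-directions is exactly what the definition of $\rho^\star$ is engineered to absorb.

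The heart of the argument is a direct expansion of the right-hand curvature. Substituting $\nabla^{\rm bas}_ab=[a,b]+\nabla_{\rho(b)}a$ into $R_{\nabla^{\rm bas}}(j,a_1)a_2=\nabla^{\rm bas}_j\nabla^{\rm bas}_{a_1}a_2-\nabla^{\rm bas}_{a_1}\nabla^{\rm bas}_ja_2-\nabla^{\rm bas}_{[j,a_1]}a_2$, the three pure-bracket terms $[j,[a_1,a_2]]-[a_1,[j,a_2]]-[[j,a_1],a_2]$ cancel by the Jacobi identity. Every surviving term of the shape $\nabla_X j'$ with $j'\in\Gamma(J)$ then drops out modulo $J$, because the extension $\nabla$ preserves $J$. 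The decisive step is the reformulation (iis1') of Proposition \ref{eq_iis}, which I would use in the congruence form $[j,a]\equiv\nabla_{\rho(j)}a\pmod J$ for all $j\in\Gamma(J)$, $a\in\Gamma(A)$. Applying it to $[j,\nabla_{\rho(a_2)}a_1]$ produces $\nabla_{\rho(j)}\nabla_{\rho(a_2)}a_1$, while applying it to $[a_1,\nabla_{\rho(a_2)}j]=-[\nabla_{\rho(a_2)}j,a_1]$ yields a term cancelling $\nabla_{\rho(\nabla_{\rho(a_2)}j)}a_1$. After these reductions one is left with $\nabla_{\rho(j)}\nabla_{\rho(a_2)}a_1-\nabla_{[\rho(j),\rho(a_2)]}a_1-\nabla_{\rho(a_2)}[j,a_1]$ modulo $J$.

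Finally I would compare this with $R_\nabla(\rho(j),\rho(a_2))a_1=\nabla_{\rho(j)}\nabla_{\rho(a_2)}a_1-\nabla_{\rho(a_2)}\nabla_{\rho(j)}a_1-\nabla_{[\rho(j),\rho(a_2)]}a_1$: the two expressions differ only by $\nabla_{\rho(a_2)}\bigl(\nabla_{\rho(j)}a_1-[j,a_1]\bigr)$, and since $\nabla_{\rho(j)}a_1-[j,a_1]\in\Gamma(J)$ by (iis1') and $\nabla$ preserves $J$, this difference lies in $\Gamma(J)$ and vanishes after projection. This establishes the cochain identity, whence $\rho^\star\alpha_{\nabla^i}=[\rho^\star\omega_\nabla]=[\omega_{\nabla^{\rm bas}}]=\alpha_J$. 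The main obstacle is the bookkeeping of the second paragraph: organizing the many summands so that the Jacobi cancellation, the ``$\nabla$ preserves $J$'' cancellations, and the two applications of (iis1') are routed to the correct terms. None of the individual steps is deep, but the identity only closes up because (iis1') is available in the clean congruence form $[j,a]\equiv\nabla_{\rho(j)}a\pmod J$.
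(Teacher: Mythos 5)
Your proposal is correct and follows essentially the same route as the paper: reduce to the cochain identity $\rho^\star\omega_{\nabla}=\omega_{\nabla^{\rm bas}}$ for the basic connection built from a single extension $\nabla$ of $\nabla^i$, then verify it by expanding $R_{\nabla^{\rm bas}}(j,a_1)a_2$, cancelling the pure brackets via Jacobi, discarding the $\nabla_\bullet j'$ terms since $\nabla$ preserves $J$, and invoking (iis1$'$) in the congruence form $[j,a]\equiv\nabla_{\rho(j)}a\pmod{J}$. The only cosmetic difference is that the paper works with the quotient connection $\overline{\nabla^{\rm bas}}$ throughout, which makes the $[a_1,\nabla_{\rho(a_2)}j]$ cancellation you carry out by hand automatic.
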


\begin{proof}
  Take an extension $\nabla$ of $\nabla^i$, and consider the
  associated basic connection $\nabla^{\rm bas}$. By Proposition
  \ref{compatibility_extensions_2}, it is an extension of the
  Bott connection $\nabla^J$ and so the Atiyah class $\alpha_J$ is the
  cohomology class of $\omega_{\nabla^{\rm bas}}$.  It suffices
  therefore to show that 
  $\rho^\star\omega_{\nabla}=\omega_{\nabla^{\rm bas}}$. This is a
  simple computation:
\begin{equation*}
\begin{split}
  \omega_{\nabla^{\rm
        bas}}(j,\overline{a_1})(\overline{a_2})
  &=\nabla_j^J\overline{\nabla^{\rm
      bas}}_{a_1}\overline{a_2}-\overline{\nabla^{\rm
      bas}}_{a_1}\nabla_j^J\overline{a_2}
  -\overline{\nabla^{\rm bas}}_{[j,a_1]}(\overline{a_2})\\
  &=\nabla_j^J\overline{[a_1,a_2]+\nabla_{\rho(a_2)}a_1}-\overline{\nabla^{\rm
      bas}}_{a_1}\overline{[j,a_2]}
  -\overline{[[j,a_1],a_2]+\nabla_{\rho(a_2)}[j,a_1]}\\
  &=\overline{[j,[a_1,a_2]+\nabla_{\rho(a_2)}a_1]}-\overline{[a_1,[j,a_2]]+\nabla_{\rho[j,a_2]}a_1}-\overline{[[j,a_1],a_2]+\nabla_{\rho(a_2)}[j,a_1]}\\
&=\overline{[j,\nabla_{\rho(a_2)}a_1]}-\overline{\nabla_{\rho[j,a_2]}a_1}-\overline{\nabla_{\rho(a_2)}[j,a_1]}\\
&=\nabla^J_j\bar\nabla_{\rho(a_2)}\overline{a_1}-\bar\nabla_{\rho[j,a_2]}\overline{a_1}
-\bar\nabla_{\rho(a_2)}\nabla^J_j\overline{a_1}\\
&=\nabla^i_{\rho(j)}\bar\nabla_{\rho(a_2)}\overline{a_1}-\bar\nabla_{[\rho(j),\rho(a_2)]}\overline{a_1}
-\bar\nabla_{\rho(a_2)}\nabla^i_{\rho(j)}\overline{a_1}=\omega_{\nabla}(\rho(j),\overline{\rho(a_2)})(\overline{a_1})\\
&=\rho^\star\omega_{\nabla}(j,\overline{a_1})(\overline{a_2})
\end{split}
\end{equation*}
for $j\in\Gamma(J)$ and $a_1,a_2\in\Gamma(A)$.
\end{proof}
Theorem \ref{last_key} and Proposition \ref{atiyah0} now induce
Theorem \ref{main}.

\def\cprime{$'$} \def\polhk#1{\setbox0=\hbox{#1}{\ooalign{\hidewidth
  \lower1.5ex\hbox{`}\hidewidth\crcr\unhbox0}}} \def\cprime{$'$}
  \def\cprime{$'$}
\providecommand{\bysame}{\leavevmode\hbox to3em{\hrulefill}\thinspace}
\providecommand{\MR}{\relax\ifhmode\unskip\space\fi MR }
\providecommand{\MRhref}[2]{%
  \href{http://www.ams.org/mathscinet-getitem?mr=#1}{#2}
}
\providecommand{\href}[2]{#2}

\end{document}